\crefname{hypothesis}{Hypothesis}{Hypotheses}
\title{Uniform error bounds of an exponential wave integrator for the long-time dynamics of the nonlinear Klein-Gordon equation\thanks{Submitted to the editors DATE.}}
\author{Yue Feng\thanks{Department of Mathematics, National University of Singapore, Singapore 119076
  (\email{fengyue@u.nus.edu}).}
\and Wenfan Yi\thanks{School of Mathematics, Hunan University, Changsha, 410082, Hunan Province, P. R. China
  (\email{wfyi@hnu.edu.cn}).}
}
\begin{document}
\nolinenumbers

\maketitle

\begin{abstract}
We establish uniform error bounds of an exponential wave integrator Fourier pseudospectral (EWI-FP) method for the long-time dynamics of the nonlinear Klein-Gordon equation (NKGE) with a cubic nonlinearity whose strength is characterized by $\varepsilon^2$ with $\varepsilon \in (0, 1]$ a dimensionless parameter. When $0 < \varepsilon \ll 1$, the problem is equivalent to the long-time dynamics of the NKGE with small initial data (and $O(1)$ cubic nonlinearity), while the amplitude of the initial data (and the solution) is at $O(\varepsilon)$. For the long-time dynamics of the NKGE up to the time at $O(1/\varepsilon^{2})$, the resolution and error bounds of the classical numerical methods depend significantly on the small parameter $\varepsilon$, which causes severe numerical burdens as $\varepsilon \to 0^+$. The EWI-FP method is fully explicit, symmetric in time and has many superior properties in solving wave equations. By adapting the energy method combined with the method of mathematical induction, we rigorously carry out the uniform error bounds of the EWI-FP discretization at  $O(h^{m_0} + \varepsilon^{2-\beta}\tau^2)$ up to the time at $O(1/\varepsilon^{\beta})$ with $0 \leq \beta \leq 2$, mesh size $h$, time step $\tau$ and $m_0$ an integer depending on the regularity of the solution. By a rescaling in time, our results are straightforwardly extended to the error bounds and $\varepsilon$-scalability (or meshing strategy requirement) of the EWI-FP method for an oscillatory NKGE, whose solution propagates waves with wavelength at $O(1)$ and $O(\varepsilon^{\beta})$ in space and time, respectively, and wave speed at $O(\varepsilon^{-\beta})$. Finally, extensive numerical results are reported to confirm our error estimates.
\end{abstract}

\begin{keywords}
nonlinear Klein-Gordon equation, Gautschi-type exponential wave integrator Fourier pseudospectral method, uniform error bounds, weak nonlinearity, long-time analysis
\end{keywords}

\begin{AMS}
35L70, 65M12, 65M15, 65M70, 81-08
\end{AMS}

\section{Introduction}
The nonlinear Klein-Gordon equation (NKGE) is well-known as the relativistic (and nonlinear) version of the Schr{\"o}dinger equation and plays a significant role in many scientific fields such as quantum electrodynamics, nonlinear optics, particle, plasma  and/or  solid state physics to describe the dynamics of the spinless particle like the pion \cite{CHL, DEGM, SJJ, SB, S, XG}. We consider the following NKGE with a cubic nonlinearity on a torus $\mathbb{T}^d (d = 1, 2, 3)$ as
\begin{equation}
\left\{
\begin{aligned}
&\partial_{tt} u({\bf{x}}, t) - \Delta u({\bf{x}}, t) + u({\bf{x}}, t) + \varepsilon^{2} u^3({\bf{x}}, t) = 0,\quad {\bf{x}} \in \mathbb{T}^d,\quad t > 0,\\
&u({\bf{x}}, 0) = \phi({\bf{x}}), \quad \partial_t u({\bf{x}}, 0) = \gamma({\bf{x}}) , \quad {\bf{x}} \in \mathbb{T}^d.
\end{aligned}\right.
\label{eq:WNE}
\end{equation}
Here, {\bf{x}} is the spatial coordinate, $t$ is time, $u := u({\bf{x}},t)$ is a real-valued scalar field, $\varepsilon \in (0, 1]$ is a dimensionless parameter and the initial data $\phi({\bf{x}})$ and $\gamma({\bf{x}})$ are two given real-valued functions independent of the parameter $\varepsilon$. Provided that $u(\cdot, t) \in H^1(\mathbb{T}^d)$ and $\partial_t u(\cdot, t) \in L^2(\mathbb{T}^d)$, the NKGE \eqref{eq:WNE} is time symmetric or time reversible and conserves the energy \cite{CHL, HLu} as
\begin{equation*}
\begin{split}
E(t) &:= E(u(\cdot, t)) =\int_{\mathbb{T}^d} \Big[ |\partial_t u ({\bf{x}}, t)|^2 + |\nabla u({\bf{x}}, t)|^2 + |u({\bf{x}}, t)|^2 + \frac{\varepsilon^2}{2} |u({\bf{x}}, t)|^4 \Big] d {\bf{x}} \\
& \equiv  \int_{\mathbb{T}^d} \Big[ |\gamma({\bf{x}})|^2 + |\nabla \phi({\bf{x}})|^2 + |\phi({\bf{x}})|^2 + \frac{\varepsilon^2}{2} |\phi({\bf{x}})|^4  \Big] d {\bf{x}} \\
&= E(0) = O(1), \qquad t \geq 0.
\end{split}
\end{equation*}

When $0 < \varepsilon \ll1 $, rescaling the amplitude of the wave function $u({\bf{x}}, t)$ by introducing $w({\bf{x}},t)  = \varepsilon u({\bf{x}},t)$, the NKGE \eqref{eq:WNE} with weak nonlinearity (and initial data with amplitude at $O(1)$) can be reformulated as the following NKGE with small initial data (and $O(1)$ cubic nonlinearity):
\begin{equation}
\left\{
\begin{aligned}
&\partial_{tt} w({\bf{x}},t)  -\Delta w({\bf{x}},t)  +   w({\bf{x}},t)  + w^3({\bf{x}},t) = 0, \quad {\bf{x}} \in \mathbb{T}^d,\quad t > 0,\\
&w({\bf{x}}, 0) = \varepsilon \phi( {\bf{x}}),\quad \partial_t w({\bf{x}}, 0) = \varepsilon \gamma({\bf{x}}),\quad {\bf{x}} \in \mathbb{T}^d.
\label{eq:SIE}
\end{aligned}\right.
\end{equation}
Noticing that the amplitude of the initial data (and the solution) of the NKGE \eqref{eq:SIE} is at $O(\varepsilon)$. Again, the above NKGE \eqref{eq:SIE} is time symmetric or time reversible and conserves the energy as
\begin{equation*}
\begin{split}
\widetilde{E}(t) &:= \widetilde{E}(w(\cdot, t)) =  \int_{\mathbb{T}^d} \Big[ |\partial_t w ({\bf{x}}, t)|^2 + |\nabla w({\bf{x}}, t)|^2 + |w({\bf{x}}, t)|^2 + \frac{1}{2} |w({\bf{x}}, t)|^4 \Big] d {\bf{x}} \\
& \equiv  \int_{\mathbb{T}^d} \Big[ \varepsilon^2|\gamma({\bf{x}})|^2 + \varepsilon^2|\nabla \phi({\bf{x}})|^2 + \varepsilon^2|\phi({\bf{x}})|^2 + \frac{\varepsilon^4}{2} |\phi({\bf{x}})|^4  \Big] d {\bf{x}} \\
& = \varepsilon^2 E(0) = O(\varepsilon^2), \qquad t \geq 0.
\end{split}
\end{equation*}
We remark here that the long-time dynamics of the NKGE with weak nonlinearity and $O(1)$ initial data, i.e. the NKGE \eqref{eq:WNE}, is equivalent to the long-time dynamics of the NKGE with small initial data and $O(1)$ nonlinearity, i.e. the NKGE (\ref{eq:SIE}). In the following, we only present numerical methods and their error estimates for the NKGE \eqref{eq:WNE} with weak nonlinearity. Extensions of the numerical methods and their error bounds to the NKGE \eqref{eq:SIE} with small initial data are straightforward.

The NKGE \eqref{eq:WNE} (or  \eqref{eq:SIE}) has been studied extensively in both analytical and numerical aspects. Along the analytical front, the existence of global classical solutions and almost periodic solutions have been investigated \cite{BJ, BV, CE, VH} and references therein. For the Cauchy problem with small initial data (or weak nonlinearity), the asymptotic behavior of solutions and long-time conservation properties have been considered in the literatures \cite{CHL, CHL1, KT, K, O, TY}. In recent years, the life-span of the solution to the NKGE (\ref{eq:SIE}) (or \eqref{eq:WNE}) has gained a surge of attention \cite{KT, LH}. The analytical results indicate that the life-span of a smooth solution to the NKGE (\ref{eq:SIE}) (or \eqref{eq:WNE}) is at least up to the time at $O(\varepsilon^{-2})$ \cite{D}. For more details related to this topic, we refer to \cite{D2, FZ} and references therein. In the numerical aspects, various numerical schemes have been developed in the literatures for the NKGE (\ref{eq:WNE}) (or (\ref{eq:SIE})), including the finite difference time domain (FDTD) methods \cite{BD, DB, SV}, exponential wave integrator Fourier pseudospectral (EWI-FP) method \cite{BD, BDZ, BY}, asymptotic-preserving (AP) schemes \cite{CCLM, FS, J}, multiscale time integrator Fourier pseudospectral (MTI-FP) method \cite{BCZ, BDZ0}, etc. However, to the best of our knowledge, the error estimates are normally valid up to the fixed time in the previous works. Since the life-span of the solution to the NKGE (\ref{eq:WNE}) (or (\ref{eq:SIE})) is up to the time at $O(\varepsilon^{-2})$, one has to extend the classical error bounds of the numerical method for the NKGE (\ref{eq:WNE}) (or (\ref{eq:SIE})) up to the time at $O(\varepsilon^{-2})$ instead of  $O(1)$, i.e. long-time error analysis. In our recent work \cite{BFY}, by employing the energy method, mathematical induction or cut-off technique, the long-time error bounds of four explicit/semi-implicit/implicit conservative/nonconservative FDTD methods have been rigorously established for the NKGE \eqref{eq:WNE} with particular attentions paid to how the error bounds depend explicitly on the mesh size $h$ and the time step $\tau$ as well as the small parameter $\varepsilon \in (0, 1]$ up to the time at $O(\varepsilon^{-\beta})$ with $0 \leq \beta \leq 2$. Based on our error bounds, in order to obtain ``correct'' numerical approximations of the NKGE \eqref{eq:WNE} up to the time  at $O(\varepsilon^{-\beta})$, the $\varepsilon-$scalability (or meshing strategy requirement) of the FDTD methods should be:
\begin{equation*}
h = O(\varepsilon^{\beta/2}) \quad \mbox{and} \quad \tau = O(\varepsilon^{\beta/2}),\end{equation*}
which suggests that the FDTD methods are {\textbf{under-resolution}} in both space and time with respect to $\varepsilon \in (0, 1]$ regarding to the Shannon's information theorem \cite{Landau,Shannon1,Shannon2}- to resolve a wave one needs to use a few grid points per wavelength. Moreover, the Crank-Nicolson finite difference method (CNFD) is fully implicit and depends on a direct solver or an iterative solver which is quite time-consuming, while other FDTD methods naturally suffer from stability problems. The fourth-order compact finite difference (4cFD) method has also been used for the long-time dynamics of the NKGE \eqref{eq:WNE}, which has better spatial resolution than the FDTD methods \cite{Feng}. Thus, a discretization that performs well and allows larger mesh size and time step for a given $\varepsilon$, is of great importance for the investigation of the long-time dynamics of the NKGE \eqref{eq:WNE} (or (\ref{eq:SIE})).

The EWI-FP method has many superior properties and is widely used to solve wave equations in different regimes \cite{BC1, BCJY, BD, BDZ, DXZ, YRS}. The key ingredients of the EWI-FP method are: (i) applying the Fourier pseudospectral discretization for the spatial derivatives;  (ii) adapting an exponential wave integrator based on some efficient quadrature rules for integrating the NKGE under proper transmission conditions between different time intervals in  phase space. The main purpose of this paper is to carry out long-time error bounds of a Gautschi-type EWI-FP discretization for the NKGE \eqref{eq:WNE} (or (\ref{eq:SIE}))  up to the time at $O(\varepsilon^{-2})$. In our numerical analysis, besides the standard technique of the energy method and inverse inequality, we adapt the mathematical induction to obtain a priori uniform bound of the numerical solution. The rigorous error bounds suggest that the $\varepsilon-$scalability of the EWI-FP method up to the time at $O(\varepsilon^{-2})$ should be:
\begin{equation*}
h = O(1) \quad \mbox{and} \quad \tau = O(1),
\end{equation*}
which is uniform in terms of $\varepsilon$ and performs much better than the FDTD and 4cFD methods. In addition, by rescaling the time as $t \to t/\varepsilon^{\beta}$, the NKGE \eqref{eq:WNE} can be reformulated as an oscillatory NKGE whose solution propagates waves with wavelength at $O(1)$ and $O(\varepsilon^{\beta})$ in space and time, respectively, and wave speed at $O(\varepsilon^{-\beta})$. The rapid oscillation in time and outgoing waves bring new challenges in the design and analysis for the numerical methods. Based on the long-time error analysis for the EWI-FP method to the NKGE \eqref{eq:WNE}, the error bounds and $\varepsilon$-scalability of the EWI-FP method for the oscillatory NKGE in a fixed time interval can be obtained straightforwardly. 

The rest of this paper is organized as follows. In Section 2, we discuss the derivation and analyze the stability of the EWI-FP discretization for the NKGE \eqref{eq:WNE}. In Section 3, we establish uniform error bounds of the EWI-FP method for the long-time dynamics of the NKGE \eqref{eq:WNE} up to the time at $O(\varepsilon^{-\beta})$ with $0 \leq \beta \leq 2$. Extensive numerical results are reported in Section 4 to confirm our error bounds. In Section 5, we extend the EWI-FP method and the error estimates to an oscillatory NKGE in a fixed time interval. Finally, some conclusions are drawn in Section 6. Throughout the paper, we adopt the standard Sobolev spaces and use the notation $p \lesssim q$ to represent that there exists a generic constant $C > 0$ which is independent of $h$, $\tau$ and $\varepsilon$ such that $|p| \leq Cq$.

\section{An exponential wave integrator spectral method}
In this section, inspired by the ideas in \cite{BC1, BD, BDZ, Gr1, Gr2, HL}, we propose and analyze a Gautschi-type exponential wave integrator Fourier spectral/pseudospectral (EWI-FS/EWI-FP) discretization for the NKGE (\ref{eq:WNE}). For simplicity of notations, we only present the numerical methods in one space dimension (1D). Thanks to tensor grids, generalizations to higher dimensions are straightforward and results remain valid with minor modifications.  In 1D, the NKGE (\ref{eq:WNE}) collapses to
\begin{equation}
\left\{
\begin{aligned}
&\partial_{tt} u(x, t) - \partial_{xx} u(x, t) + u(x, t) + \varepsilon^{2} u^3(x, t) = 0,\quad x \in \Omega = (a, b),\quad t > 0,\\
&u(x, 0) = \phi(x), \quad \partial_t u(x, 0) = \gamma(x), \quad x \in \overline{\Omega} = [a, b],
\label{eq:WNE_1D}
\end{aligned}\right.
\end{equation}
 with periodic boundary conditions.

\subsection{The method}

Choose the time step $\tau = \Delta t > 0$ and mesh size $h := \Delta x = (b-a)/M$ with $M$ being an even positive integer, and denote time steps by $t_n = n\tau$ for $n \geq 0$ and grid points as $x_j := a+j h$, $j = 0, 1, \cdots, M$. Denote the index set $\mathcal{T}_M = \{l | l = -M/2,-M/2+1, \cdots, M/2-1\}$, and define $C_p(\Omega) = \{u \in C(\Omega) | u(a) = u(b)\}$ and
\begin{equation*}
\begin{split}
& X_M := \mbox{span}\{e^{i\mu_l(x-a)}, \mu_l = \frac{2 \pi l}{b-a}, x \in \overline{\Omega}, l \in \mathcal{T}_M \}, \\
&Y_M := \{u=(u_0, u_1, \cdots, u_M) \in \mathbb{R}^{M+1} : u_0 = u_M\}.
\end{split}
\end{equation*}
  For any $u(x) \in C_p(\Omega)$ and a vector $u \in Y_M$, define $P_M : L^2(\Omega) \to X_M$ as the standard $L^2-$projection operator onto $X_M$, $I_M : C_p(\Omega) \to X_M$ or $I_M : Y_M \to X_M$ as the trigonometric interpolation operator \cite{HGG, ST}, i.e.,
\begin{equation*}
(P_M u)(x) = \sum_{l \in \mathcal{T}_M} \widehat{u}_l e^{i\mu_l (x-a)}, \quad (I_M u)(x) = \sum_{l \in \mathcal{T}_M} \widetilde{u}_l e^{i\mu_l (x-a)}, \quad x \in \overline{\Omega},
\end{equation*}
where
\begin{equation}
\widehat{u}_l = \frac{1}{b-a} \int^{b}_{a} u(x) e^{-i\mu_l (x-a)} dx, \quad \widetilde{u}_l = \frac{1}{M} \sum^{M-1}_{j=0} u_j e^{-i\mu_l (x_j-a)},\quad l \in \mathcal{T}_M,
\label{eq:def_hat}
\end{equation}
with $u_j$ interpreted as $u(x_j)$ when involved.

We begin with the Fourier spectral method for discretizing the NKGE (\ref{eq:WNE_1D}) in space, i.e., find
\begin{equation}
u_M(x, t) = \sum_{l \in \mathcal{T}_M} \widehat{(u_M)}_l(t) e^{i\mu_l(x-a)}\in X_M, \quad x \in \overline{\Omega}, \quad t \geq 0,
\label{eq:um}
\end{equation}
such that
\begin{equation}
\partial_{tt} u_M(x, t) - \partial_{xx} u_M(x, t)+ u_M(x, t) + \varepsilon^{2} P_M f(u_M(x, t)) = 0,\quad x \in \overline{\Omega},\quad t \geq 0,\\
\label{eq:21um}
\end{equation}
with $f(v) = v^3$. Plugging (\ref{eq:um}) into (\ref{eq:21um}) and noticing the orthogonality of the Fourier basis functions, we find
\begin{equation}
\frac{d^2}{dt^2} \widehat{(u_M)}_l(t) + (1+\mu^2_l) \widehat{(u_M)}_l(t) + \varepsilon^{2} \widehat{(f(u_M))}_l(t) = 0,\quad  l \in \mathcal{T}_M,\quad t \geq 0.
\end{equation}
For each fixed $l (l \in \mathcal{T}_M)$, when $t$ is near $t = t_n (n \geq 0)$, the above ODEs can be rewritten as
\begin{equation}
\frac{d^2}{d\theta^2} \widehat{(u_M)}_l(t_n + \theta) + \zeta_l^2 \widehat{(u_M)}_l(t_n + \theta) + \varepsilon^{2} \widehat{f}^n_l(\theta) = 0, \quad \theta \in \mathbb{R},
\label{eq:dtheta}
\end{equation}
where
\begin{equation}
\begin{split}
\zeta_l =\sqrt{1 + \mu^2_l},\quad \widehat{f}^n_l(\theta) =\widehat{(f(u_M))}_l(t_n+\theta).
\end{split}
\label{eq:zeta_n}
\end{equation}

Now, we proceed to apply an exponential wave integrator for solving the ODEs (\ref{eq:dtheta}). By using the variation-of-constants formula for the ODEs (\ref{eq:dtheta}), the general solutions of the above second-order ODEs can be written as
\begin{equation}
\widehat{(u_M)}_l(t_n+\theta) = c^n_l \cos(\theta\zeta_l) +  d^n_l \frac{\sin(\theta\zeta_l)}{\zeta_l}-\frac{\varepsilon^2}{\zeta_l}\int^{\theta}_0 \widehat{f}^n_l(\omega) \sin(\zeta_l(\theta-\omega))d \omega,
\label{eq:voc}
\end{equation}
where $c^n_l$ and $d^n_l$ are two constants to be determined. When $n = 0$, we consider the solution (\ref{eq:voc}) for $\theta \in [0, \tau]$. The initial conditions in the NKGE (\ref{eq:WNE_1D}) imply
\begin{equation}
\widehat{(u_M)}_l(0) = \widehat{\phi}_l,\quad \frac{d}{d t}\widehat{(u_M)}_l(0) = \widehat{\gamma}_l,\quad l \in \mathcal{T}_M.
\label{eq:phihat}
\end{equation}
Plugging (\ref{eq:phihat}) into (\ref{eq:voc}) with $n=0$ and then letting $\theta = \tau$ to determine $c^0_l$ and $d^0_l$, we get
\begin{equation}
\widehat{(u_M)}_l(\tau) = \widehat{\phi}_l \cos(\tau\zeta_l) +  \widehat{\gamma}_l \frac{\sin(\tau\zeta_l)}{\zeta_l} - \frac{\varepsilon^2}{\zeta_l}\int^{\tau}_0 \widehat{f}^0_l(\omega) \sin(\zeta_l(\tau-\omega))d \omega.
\label{eq:uhat0}
\end{equation}
When $n > 0$, we consider the solution (\ref{eq:voc}) for $\theta \in [-\tau, \tau]$ and require the solution to be continuous at $t= t_{n}$ and $t = t_{n-1} = t_n - \tau$. Plugging $\theta = 0$ and $\theta  = -\tau$ into (\ref{eq:voc}) and letting $\theta = \tau$ to determine $c^n_l$ and $d^n_l$, we obtain
\begin{equation}
\begin{split}
\widehat{(u_M)}_l(t_{n+1}) = & -\widehat{(u_M)}_l(t_{n-1}) + 2 \cos(\tau\zeta_l) \widehat{(u_M)}_l(t_n) \\
& - \frac{\varepsilon^2}{\zeta_l}\int^{\tau}_0 \left[ \widehat{f}^n_l(-\omega) +  \widehat{f}^n_l(\omega) \right] \sin(\zeta_l(\tau-\omega))d \omega.
\end{split}
\label{eq:uhatn}
\end{equation}

In order to design an explicit scheme, we approximate the integrals in (\ref{eq:uhat0})-(\ref{eq:uhatn}) by the Gautschi-type quadrature \cite{Gau},
\begin{equation}\nonumber
\begin{split}
& \int^{\tau}_0 \widehat{f}^0_l(\omega) \sin(\zeta_l(\tau-\omega))d \omega \approx  \widehat{f}^0_l(0) \int^{\tau}_0  \sin(\zeta_l(\tau-\omega))d \omega = \frac{\widehat{f}^0_l(0)}{\zeta_l} \left[1-\cos(\tau\zeta_l)\right],\\
& \int^{\tau}_0 \left[ \widehat{f}^n_l(-\omega) + \widehat{f}^n_l(\omega) \right]\sin(\zeta_l(\tau-\omega))d \omega \approx \frac{2\widehat{f}^n_l(0)}{\zeta_l} \left[1-\cos(\tau\zeta_l)\right],\;\; {l \in \mathcal{T}_M},\;\; n\geq 1.
\end{split}
\end{equation}

Denote $\widehat{(u^n_M)}_l$ and $u^n_M(x)$ be the approximations of $\widehat{(u_M)}_l(t_n)$ and $u_M(x, t_n)$, respectively. Choosing $u^0_M(x) = (P_M \phi)(x)$, a Gautschi-type exponential integrator Fourier spectral (EWI-FS) discretization for the NKGE (\ref{eq:WNE_1D}) is to update the numerical approximation $u^{n+1}_M(x) \in X_M (n \geq 0)$ as
\begin{equation}
u^{n+1}_M(x) = \sum_{l \in \mathcal{T}_M} \widehat{(u^{n+1}_M)}_l e^{i\mu_l(x-a)},\quad x \in \overline{\Omega},\quad n\geq 0,
\label{eq:u1}
\end{equation}
where
\begin{equation}
\begin{split}
&\widehat{(u^1_M)}_l = p_l \widehat{\phi}_l + q_l \widehat{\gamma_l} + r_l\widehat{(f(\phi))}_l, \quad l \in \mathcal{T}_M, \\
&\widehat{(u^{n+1}_M)}_l = -\widehat{(u^{n-1}_M)}_l + 2 p_l \widehat{(u^n_M)}_l + 2 r_l \widehat{(f(u^n_M))}_l,\quad{l \in \mathcal{T}_M},\quad n \geq 1,
\end{split}
\label{eq:EWI-FS}
\end{equation}
with the coefficients defined as
\begin{equation}
 p_l = \cos(\tau\zeta_l),\quad q_l = \frac{\sin(\tau\zeta_l)}{\zeta_l},\quad r_l = \frac{\varepsilon^2(\cos(\tau\zeta_l)-1)}{\zeta_l^2}.
\label{eq:pql}
\end{equation}

However, in practice, it is difficult to compute the Fourier coefficients in (\ref{eq:EWI-FS}). We simply replace the projections by the interpolations here, which refers to the Fourier pseudospectral discretizaion. Let $u^n_j$ be the approximation of $u(x_j, t_n)$ and denote $u^0_j = \phi(x_j) (j = 0, 1, \cdots, M)$. For $n = 0, 1, \cdots$, a Gautschi-type exponential integrator Fourier pseudospectral (EWI-FP) discretization for the NKGE (\ref{eq:WNE_1D}) is
\begin{equation}
u^{n+1}_j = \sum_{l \in \mathcal{T}_M}\widetilde{u}^{n+1}_l e^{i\mu_l(x_j-a)},\quad j = 0, 1, \cdots, M,
\label{eq:u_n1}
\end{equation}
where
\begin{equation}
\begin{split}
&\widetilde{u}^1_l = p_l\widetilde{\phi}_l +q_l\widetilde{\gamma_l} + r_l\widetilde{(f(\phi))}_l, \quad l \in \mathcal{T}_M,\\
&\widetilde{u}^{n+1}_l = -\widetilde{u}^{n-1}_l + 2p_l\widetilde{u}^n_l + 2r_l\widetilde{(f(u^{n}))}_l,\quad l \in \mathcal{T}_M, \quad n \geq 1,
\end{split}
\label{eq:EWI-FP}
\end{equation}
with the coefficients $p_l$, $q_l$ and $r_l$ are given in \eqref{eq:pql}.

The EWI-FP \eqref{eq:u_n1}-\eqref{eq:EWI-FP} is explicit, time symmetric and easy to extend to 2D and 3D. The memory cost is $O(M)$ and the computational cost per time step is $O(M\log M)$.

\subsection{Stability analysis}
Let $T_0 > 0$ be a fixed constant and $0 \leq \beta \leq 2$, and denote
\begin{equation}
\sigma_{\max} : = \max_{0 \leq n \leq T_0\varepsilon^{-\beta}/\tau}\|u^n\|^2_{l^{\infty}},
\end{equation}
where $\|u\|_{l^\infty}=\max\limits_{0\leq j\leq M-1}|u_j|$ for $u\in Y_M$.
According to the standard von Neumann stability analysis in \cite{SG}, we can conclude the stability results of the EWI-FP \eqref{eq:u_n1}-\eqref{eq:EWI-FP} for the NKGE \eqref{eq:WNE_1D} in the following lemma.
\begin{lemma} (stability)
The EWI-FP \eqref{eq:u_n1}-\eqref{eq:EWI-FP} is conditionally stable under the stability condition
\begin{equation}
0 < \tau \leq \frac{2h}{\sqrt{\pi^2+h^2(1+\varepsilon^2\sigma_{\max})}},\quad h > 0,\quad 0 < \varepsilon \leq 1.
\label{eq:stability}	
\end{equation}
\label{thm:stability}
\end{lemma}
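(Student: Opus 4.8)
The plan is to carry out a von Neumann (frozen-coefficient) analysis of the linearization of the two-step recurrence \eqref{eq:EWI-FP}, as indicated by the reference to the standard treatment in \cite{SG}. First I would freeze the cubic nonlinearity: writing $f(u^n)=(u^n)^2\,u^n$ and treating the factor $(u^n)^2$ as a locally constant coefficient $G_l$, the term $2r_l\widetilde{(f(u^n))}_l$ is replaced by $2r_lG_l\,\widetilde{u}^n_l$, where the frozen coefficient satisfies $0\le G_l\le \|u^n\|_{l^\infty}^2\le\sigma_{\max}$. This reduces the scheme in phase space to the scalar linear recurrence $\widetilde{u}^{n+1}_l+\widetilde{u}^{n-1}_l=2H_l\,\widetilde{u}^n_l$, decoupled over $l\in\mathcal{T}_M$, with amplification factor $H_l:=p_l+r_lG_l$.

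Next I would analyze the associated characteristic equation $\lambda^2-2H_l\lambda+1=0$ (equivalently, the eigenvalues of the companion amplification matrix of the two-step scheme). Since the product of its two roots equals $1$, stability is equivalent to requiring both roots to lie on the unit circle, which holds precisely when $|H_l|\le 1$ for every $l\in\mathcal{T}_M$. Writing $c=\cos(\tau\zeta_l)$ and $B_l=\varepsilon^2G_l/\zeta_l^2\ge 0$, one computes $H_l=c(1+B_l)-B_l$; because $r_l\le 0$ while $G_l\ge 0$, the upper bound $H_l\le c\le 1$ is automatic, so the only binding constraint is $H_l\ge -1$. Using $c=1-2\sin^2(\tau\zeta_l/2)$, I would show that $H_l\ge -1$ is equivalent to the clean trigonometric inequality $\sin^2(\tau\zeta_l/2)\,(1+B_l)\le 1$.

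Finally I would close the estimate with the elementary bound $\sin^2 x\le x^2$, which gives $\sin^2(\tau\zeta_l/2)(1+B_l)\le(\tau\zeta_l/2)^2(1+B_l)=\tfrac14\tau^2(\zeta_l^2+\varepsilon^2G_l)$. Since $\zeta_l^2\le\zeta_{\max}^2=1+\mu_{\max}^2$ with $\mu_{\max}=\pi/h$ the largest frequency in $\mathcal{T}_M$, and $G_l\le\sigma_{\max}$, together with the identity $h^2\zeta_{\max}^2=\pi^2+h^2$, the stated bound $\tau\le 2h/\sqrt{\pi^2+h^2(1+\varepsilon^2\sigma_{\max})}$ is exactly the requirement $\tfrac14\tau^2(\zeta_{\max}^2+\varepsilon^2\sigma_{\max})\le 1$; hence it forces $\sin^2(\tau\zeta_l/2)(1+B_l)\le 1$ for all $l$ and yields $|H_l|\le 1$, i.e. the asserted conditional stability. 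I expect the genuine subtlety to lie not in any of these computations but in the freezing step itself: von Neumann analysis is only formally justified for the nonlinear scheme, so the argument rests on bounding the frozen coefficient uniformly by $\sigma_{\max}$ and on reading the marginal case $|H_l|=1$ (a double root $\lambda=\pm1$, producing at most polynomial growth) as the conditional/marginal stability of \cite{SG} rather than as strict power-boundedness.
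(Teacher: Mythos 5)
Your proposal is correct and follows essentially the same route as the paper: a frozen-coefficient von Neumann analysis replacing the cubic term by a linear one bounded via $\sigma_{\max}$, the characteristic equation $\xi_l^2-2\theta_l\xi_l+1=0$ with root product $1$ so that stability reduces to $|\theta_l|\le 1$, and the closing estimate via $\sin^2 x\le x^2$ together with the maximal frequency $\mu_{\max}=\pi/h$, which reproduces the condition $\tau\le 2h/\sqrt{\pi^2+h^2(1+\varepsilon^2\sigma_{\max})}$ exactly. Your refinements — tracking a mode-wise frozen coefficient $G_l\le\sigma_{\max}$, noting that only $H_l\ge -1$ is binding since $r_l\le 0$, and flagging the marginal double-root case $|H_l|=1$ — are sound but inessential, since the paper's one-sided check $0<(2\varepsilon^2\sigma_{\max}/\zeta_l^2+2)\sin^2(\tau\zeta_l/2)\le 2$ covers the same ground.
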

\begin{proof}
Replacing the nonlinear term by $f(u) = \varepsilon^2\sigma_{\max}u$ and plugging
\begin{equation}\nonumber
u^{n-1}_j = \sum_{l \in \mathcal{T}_M}\widehat{U_l} e^{2ijl\pi/M},\; u^{n}_j =  \sum_{l \in \mathcal{T}_M}\xi_l\widehat{U_l} e^{2ijl\pi/M},\; u^{n+1}_j =  \sum_{l \in \mathcal{T}_M}\xi^2_l\widehat{U_l} e^{2ijl\pi/M},
\end{equation}
into (\ref{eq:EWI-FP}) with $\xi_l$ the amplification factor of the $l$th mode in phase space, we obtain the following characteristic equation
\begin{equation}
\xi^2_l - 2\theta_l\xi_l + 1 = 0,\quad {l \in \mathcal{T}_M},
\label{eq:xi}
\end{equation}
with
\begin{equation*}
\theta_l  =  \cos(\tau\zeta_l) + \frac{\varepsilon^2\sigma_{\max}(\cos(\tau\zeta_l)-1)}{\zeta_l^2} =  1- \left(\frac{2\varepsilon^2\sigma_{\max}}{\zeta_l^2} +2 \right)\sin^2\left(\frac{\tau\zeta_l}{2}\right),\ {l \in \mathcal{T}_M}.
\end{equation*}
Since the characteristic equation (\ref{eq:xi}) implies $\xi_l = \theta_l \pm \sqrt{\theta^2_l-1}$,  it indicates that the stability of the  EWI-FP \eqref{eq:u_n1}-\eqref{eq:EWI-FP} amounts to
\begin{equation}
|\xi_l| \leq 1 \ \Longleftrightarrow \ |\theta_l| \leq 1,\quad {l \in \mathcal{T}_M}.
\end{equation}
Noticing $\sin(x) \leq x$ for $x \geq 0$, under the condition \eqref{eq:stability}, we have
\begin{equation}
0 < 	\left(\frac{2\varepsilon^2\sigma_{\max}}{\zeta_l^2} +2 \right)\sin^2\left(\frac{\tau\zeta_l}{2}\right) \leq \left(\frac{2\varepsilon^2\sigma_{\max}}{\zeta_l^2} +2 \right)\cdot\left(\frac{\tau\zeta_l}{2}\right)^2 \leq 2,
\end{equation}
which immediately leads to the conclusion.
\end{proof}

\begin{remark}
The stability of the EWI-FP \eqref{eq:u_n1}-\eqref{eq:EWI-FP} is related to $\sigma_{\max}$, dependent on the boundedness of the $l^{\infty}$ norm of the numerical solution $u^n$ at the previous time step. The error estimates up to the previous time step could ensure such a bound in the $l^{\infty}$ norm, by making use of the discrete Sobolev inequality, and such an error estimate could be recovered at the next time step, as given by the Theorem \ref{thm:EWI-FS} presented in Section 3. 
\end{remark}


\section{Uniform error bounds for long-time dynamics}
In this section, we rigorously carry out the uniform error bounds of the EWI-FS \eqref{eq:u1}-\eqref{eq:EWI-FS}/EWI-FP \eqref{eq:u_n1}-\eqref{eq:EWI-FP} for the NKGE \eqref{eq:WNE_1D} up to the time $t \in [0, T_0/\varepsilon^{\beta}]$ with $0 \leq \beta \leq 2$.
\subsection{Main results}
For an integer $m \geq 0$, $\Omega=(a, b)$, we denote by $H^m(\Omega)$ the standard Sobolev space with norm
\begin{align}
	\|z\|_m^2= \sum\limits_{l\in\mathbb{Z}}(1+\mu_l^2)^m|\widehat{z}_l|^2,\quad \mbox{for}\quad z(x)=\sum\limits_{l\in\mathbb{Z}}\widehat{z}_le^{i\mu_l(x-a)}, \quad \mu_l = \frac{2 \pi l}{b-a},
\end{align}
where $\widehat{z}_l (l\in\mathbb{Z})$ are the Fourier transform coefficients of the function $z(x)$. For $m = 0$, the space is exactly $L^2(\Omega)$ and the corresponding norm is denoted as $\|\cdot\|$. Furthermore, we denote by $H^m_p(\Omega)$ the subspace of $H^m(\Omega)$ which consists of functions with derivatives of the order up to $m-1$ being $(b-a)$-periodic \cite{ST}.

Motivated by the results in \cite{D, KT, K, O} and references therein, we assume that there exists an integer $m_0 \geq 2$ such that the exact solution $u(x, t)$ of the NKGE \eqref{eq:WNE_1D} up to the time $T_{\varepsilon}=T_0/\varepsilon^{\beta}$ with $\beta \in [0, 2]$ and $T_0>0$ fixed satisfies 
\begin{equation*}
{(\rm{A})}\hspace{-1.5cm}
\begin{split}
&u(x, t) \in L^{\infty}\left([0, T_{\varepsilon}]; H^{m_0}_p\right), \ \partial_{t}u(x, t) \in L^{\infty} \left([0, T_{\varepsilon}]; W^{1,4}\right), \\
&\partial_{tt}u(x, t) \in  L^{\infty}\left([0, T_{\varepsilon}]; H^1\right),\\
&\|u(x, t)\|_{L^{\infty}\left([0, T_{\varepsilon}]; H^{m_0}_p\right)} \lesssim 1,\ \ \|\partial_t u(x, t)\|_{L^{\infty}\left([0, T_{\varepsilon}]; W^{1,4}\right)} \lesssim 1,\\
&\|\partial_{tt} u(x, t)\|_{L^{\infty}\left([0, T_{\varepsilon}]; H^1\right)} \lesssim 1.\\
\end{split}
\end{equation*}

Under the assumption (A), we let
\begin{equation}\nonumber
\begin{split}
& M_1 := \max_{\varepsilon \in (0, 1]} \{\|u(x, t)\|_{L^{\infty}\left([0, T_{\varepsilon}]; L^{\infty}\right)} + \|\partial_t u(x, t)\|_{L^{\infty}\left([0, T_{\varepsilon}]; L^{\infty}\right)} \}  \lesssim 1,\\
& M_2 := \sup_{v \neq 0, |v| \leq 1 + M_1} |v|^2 \lesssim 1.
\end{split}
\end{equation}

Assuming
\begin{equation}
\tau \leq  \min\left\{\dfrac18, \;\frac{\pi h}{3\sqrt{\pi^2 + h^2(1+\varepsilon^2 M_2)}}\right\},\quad 0 < \varepsilon \leq 1,
\label{eq:con_tau}
\end{equation}
we can establish the following error bounds of the EWI-FS \eqref{eq:u1}-\eqref{eq:EWI-FS}.\begin{theorem}
Let $u^n_M(x)$ be the approximation obtained from the EWI-FS \eqref{eq:u1}\ -\eqref{eq:EWI-FS}, under the stability condition \eqref{eq:stability} and the assumptions {\rm(A)} and \eqref{eq:con_tau}, there exist constants $h_0 > 0$ and $\tau_0 >0$ sufficiently small and independent of $\varepsilon$, such that for any $0 < \varepsilon \leq 1$ and $0 \leq \beta \leq 2$, when $0 < h \leq h_0$, $0 < \tau \leq \tau_0$, we have
\begin{equation}
\begin{split}
&\|u(x, t_n) - u^n_M(x)\|_{\lambda} \lesssim h^{m_0-\lambda} + \varepsilon^{2-\beta}{\tau^2},\quad \lambda=0,1,\\
& \|u^n_M(x)\|_{L^{\infty}} \leq 1 + M_1, \quad 0 \leq n \leq \frac{T_0/\varepsilon^{\beta}}{\tau}.
\end{split}
\label{eq:error_WNE}
\end{equation}
\label{thm:EWI-FS}
\end{theorem}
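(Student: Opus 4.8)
The plan is to prove both assertions of \eqref{eq:error_WNE} \emph{simultaneously} by induction on $n$, because the $L^\infty$ bound on $u^n_M$ is needed to invoke both the stability condition \eqref{eq:stability} and a local Lipschitz estimate for the cubic nonlinearity, while that $L^\infty$ bound is itself recovered from the $H^1$ error bound. First I would split the error as $u(\cdot,t_n)-u^n_M=\big(u(\cdot,t_n)-P_Mu(\cdot,t_n)\big)+e^n$ with $e^n:=P_Mu(\cdot,t_n)-u^n_M\in X_M$; by the standard spectral projection estimate and assumption {\rm(A)} the first piece is $\lesssim h^{m_0-\lambda}$, so it remains to bound $\|e^n\|_\lambda$. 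Passing to Fourier coefficients and using that, for each $l\in\mathcal T_M$, the exact coefficient $\widehat u_l$ obeys the variation-of-constants identity \eqref{eq:uhatn} with the true nonlinearity, subtraction of the scheme \eqref{eq:EWI-FS} yields the mode-wise error recurrence
\[
e^{n+1}_l-2p_le^n_l+e^{n-1}_l=\eta^n_l+\xi^n_l,\qquad e^0_l=0,
\]
where $\eta^n_l$ is the Gautschi quadrature (local truncation) error and $\xi^n_l=2r_l\big[\widehat{(f(u^n_M))}_l-\widehat{(f(u(\cdot,t_n)))}_l\big]$ is the nonlinear-evaluation error, while $e^1_l$ is the analogous one-step defect from \eqref{eq:uhat0}.

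Next I would establish sharp bounds on the two sources. For the nonlinearity, since $f(v)=v^3$ and $|2r_l|\lesssim\varepsilon^2\tau^2$, the induction hypothesis $\|u^k_M\|_{L^\infty}\le 1+M_1$ together with $\|u\|_{L^\infty}\lesssim1$ gives $\|\xi^n\|_\lambda\lesssim\varepsilon^2\tau^2\big(\|e^n\|_\lambda+h^{m_0-\lambda}\big)$. For the truncation term the crucial point is to retain the oscillatory weight $\sin(\zeta_l(\tau-\omega))$ inside the quadrature error and integrate by parts (equivalently, evaluate $\int_0^\tau\omega^2\sin(\zeta_l(\tau-\omega))\,d\omega=\tfrac{\tau^2}{\zeta_l}+\tfrac{2(\cos\tau\zeta_l-1)}{\zeta_l^3}$ exactly), which produces the decay $|\eta^n_l|\lesssim\frac{\varepsilon^2\tau^2}{\zeta_l^2}\sup_\omega|(\widehat{f(u)})''_l(\omega)|$ rather than a crude pointwise $O(\varepsilon^2\tau^3)$; summing against assumption {\rm(A)} — note $\partial_{tt}(u^3)=3u^2\partial_{tt}u+6u(\partial_t u)^2\in L^2$ by the stated regularity — gives $\|\eta^n\|_{\lambda-1}\lesssim\varepsilon^2\tau^2$. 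The one-step defect $e^1$ must be treated in the same frequency-resolved way.

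I would then run the energy method on the recurrence. With the discrete energy $\mathcal E^n_l=|e^{n+1}_l|^2+|e^n_l|^2-2p_l\,\mathrm{Re}(e^{n+1}_l\overline{e^n_l})$, which under \eqref{eq:con_tau} is nonnegative and comparable to $|e^{n+1}_l-e^n_l|^2+(\tau\zeta_l)^2|e^n_l|^2$, the identity $\mathcal E^n_l-\mathcal E^{n-1}_l=\mathrm{Re}\big[(\eta^n_l+\xi^n_l)\overline{(e^{n+1}_l-e^{n-1}_l)}\big]$ holds exactly. Summing over $l$ with weight $(1+\mu_l^2)^{\lambda-1}$, the $\xi^n$ contribution is controlled by $C\varepsilon^2\tau\,\mathcal E^n$, so discrete Gronwall produces the amplification factor $\exp(C\varepsilon^2\tau\,n)\le\exp(CT_0\varepsilon^{2-\beta})\le\exp(CT_0)$, uniform in $\varepsilon$ \emph{precisely because} $\beta\le2$; the accumulated truncation then yields the temporal error, whose sharp size $\varepsilon^{2-\beta}\tau^2$ (rather than one order worse) is the delicate point addressed below. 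Finally, the induction closes via the inverse/discrete Sobolev inequality: from $\|e^{n+1}\|_1\lesssim h^{m_0-1}+\varepsilon^{2-\beta}\tau^2$ one gets $\|u^{n+1}_M\|_{L^\infty}\le\|u(\cdot,t_{n+1})\|_{L^\infty}+\|P_Mu-u\|_{L^\infty}+\|e^{n+1}\|_{L^\infty}\le 1+M_1$ for $h_0,\tau_0$ small, which also validates \eqref{eq:stability} through $\sigma_{\max}\le(1+M_1)^2\le M_2$.

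The main obstacle is extracting the \emph{second}-order temporal rate $\tau^2$ uniformly in the frequency $\zeta_l$ (hence uniformly in $h$) and in $\varepsilon$. Since the discrete energy only controls $\tau\zeta_l|e^n_l|$, naively bounding the accumulated source by $\sum_k\|\eta^k\|$ loses one power of $\tau$ and would give only $\varepsilon^{2-\beta}\tau$; recovering the sharp rate requires exploiting the cancellation in the discrete Duhamel kernel $\sin((n-k)\tau\zeta_l)/\sin(\tau\zeta_l)$, either by summation by parts in time (using that $\eta^k_l$ is smooth in $k$, so consecutive differences gain a factor $\tau$) or by a regime split in $\zeta_l$: the kernel amplification $1/\sin(\tau\zeta_l)\sim 1/(\tau\zeta_l)$ is large only for the low modes, where $\eta^n_l$ and $e^1_l$ are of higher order in $\tau$, whereas for the high modes (where $\tau\zeta_l=O(1)$ under the CFL relation $\tau\lesssim h$ forced by \eqref{eq:con_tau}) the amplification is $O(1)$ and the $\zeta_l^{-2}$ decay compensates. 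Marrying this frequency-by-frequency bookkeeping with the $\varepsilon^2$-weighted Gronwall so that everything stays uniform up to $t=O(\varepsilon^{-\beta})$ is the technical heart of the argument.
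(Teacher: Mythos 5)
Your overall architecture coincides with the paper's proof: the same splitting of the error through $P_M$, the same mode-wise recurrence obtained by subtracting the scheme from the variation-of-constants identity, the same decomposition of the defect into a Gautschi quadrature error plus a nonlinear mismatch weighted by $2r_l$, an energy method on the two-step recurrence, a Young/Gronwall balancing calibrated so that the amplification over $O(\varepsilon^{-\beta}/\tau)$ steps stays $O(1)$, and the induction closed through the inverse/Sobolev inequalities to recover $\|u^{n+1}_M\|_{L^\infty}\le 1+M_1$ and hence the stability condition. Indeed, your quadratic form $|e^{n+1}_l|^2+|e^n_l|^2-2p_l\,\mathrm{Re}\bigl(e^{n+1}_l\overline{e^n_l}\bigr)$ is exactly the bracket in the paper's energy \eqref{eq:def_E} multiplied by $1-\cos(\tau\zeta_l)$.

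The genuine gap is at what you yourself call the technical heart. You correctly diagnose that, in your normalization, the estimate closes one power of $\tau$ short, but you then only \emph{sketch} remedies (cancellation in the discrete Duhamel kernel, summation by parts in $k$, a low/high frequency split) without executing any of them --- and none is needed. The loss is an artifact of your weighting: summing with $(1+\mu_l^2)^{\lambda-1}$, your energy controls only $\tau^2\|e^n\|_\lambda^2$ through the factor $1-\cos(\tau\zeta_l)\sim(\tau\zeta_l)^2$, so a per-step defect of size $\varepsilon^2\tau^3$ in that weak norm yields merely $\varepsilon^{2-\beta}\tau$. The paper instead puts the singular weight \emph{inside} the energy, working with $(1+\mu_l^2)^\lambda/(1-\cos(\tau\zeta_l))$; since \eqref{eq:con_tau} forces $\tau\zeta_l\le\pi/3$, hence $\cos(\tau\zeta_l)\ge 1/2$, this energy dominates $\|\eta^{n+1}\|_\lambda^2$ directly. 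The point you miss is that the defect intrinsically carries the compensating factor: Cauchy--Schwarz with $\sin(\zeta_l(\tau-\omega))\,d\omega$ as the measure gives $|\widehat{\xi}^{n+1}_l|^2\lesssim \tau\varepsilon^4\bigl[1-\cos(\tau\zeta_l)\bigr]\int_0^\tau|\widehat{W}^{n+1}_l(\omega)|^2d\omega$ as in \eqref{eq:xi_n1}, and $\widehat{\chi}^{n+1}_l$ carries $(1-\cos(\tau\zeta_l))^2$ through $r_l$, so the division by $1-\cos(\tau\zeta_l)$ costs nothing, cf. \eqref{eq:xi_sn1}--\eqref{eq:chi_sn1}. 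Combined with the symmetric second-difference bound $\|u^3(\cdot,t_n+\omega)+u^3(\cdot,t_n-\omega)-2u^3(\cdot,t_n)\|_\lambda\lesssim\omega^2$ (your identity $\partial_{tt}(u^3)=3u^2\partial_{tt}u+6u(\partial_tu)^2$ is precisely how assumption (A) enters), the per-step weighted defect is $O(\varepsilon^4\tau^6)$; Young with the $\varepsilon^\beta\tau$ split and accumulation over $T_0\varepsilon^{-\beta}/\tau$ steps give $\varepsilon^{4-2\beta}\tau^4$ for the energy, i.e., exactly $\varepsilon^{2-\beta}\tau^2$ --- no kernel cancellation, summation by parts, or regime split required. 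A secondary inaccuracy: your pointwise bound $|\eta^n_l|\lesssim \varepsilon^2\tau^2\zeta_l^{-2}\sup_\omega|(\widehat{f(u)})''_l(\omega)|$ replaces $\widehat{W}^{n+1}_l$ by $\omega^2$ times a mode-wise sup that cannot be moved through the $l$-sum cleanly; keep $\int_0^\tau|\widehat{W}_l(\omega)|^2d\omega$ and apply Parseval at each fixed $\omega$, as the paper does. (Your choice to keep $\|e^n\|_\lambda$ inside Gronwall for the nonlinear term, where the paper inserts the induction hypothesis directly into \eqref{eq:chi_sn1}, is a harmless variant.)
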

\begin{remark}
(1). The EWI-FS \eqref{eq:u1}-\eqref{eq:EWI-FS} is a semi-discretization to the NKGE \eqref{eq:WNE_1D}, while the EWI-FP \eqref{eq:u_n1}-\eqref{eq:EWI-FP} is a full-discretization. The error estimates for the EWI-FP \eqref{eq:u_n1}-\eqref{eq:EWI-FP} are quite similar as those in Theorem \ref{thm:EWI-FS} with the proof being quite similar to that in Section 3.2 and we omit the details for brevity.

(2). In 2D/3D case, Theorem \ref{thm:EWI-FS} is still valid under the technical condition $\tau \lesssim \sqrt{C_d(h)}$, where $C_d(h)=1/|\ln h|$ for $d=2$; and resp., $C_d(h)=h^{1/2}$ for $d=3$. 
\end{remark}

These results are very useful in practical computations on how to select mesh size and time step such that the numerical results are trustable. The error bounds indicate that the $\varepsilon$-scalability of the EWI-FS\eqref{eq:u1}-\eqref{eq:EWI-FS}/EWI-FP \eqref{eq:u_n1}-\eqref{eq:EWI-FP} up to the time at $O(\varepsilon^{-\beta})$ is uniform in terms of $\varepsilon$ and should be taken as: \[h=O(1),\quad \tau=O(1),\quad \mbox{for any} \quad 0<\varepsilon\leq 1\quad \mbox{and}\quad 0\leq\beta\leq 2.\]

\subsection{Proof of Theorem \ref{thm:EWI-FS}}
The key points of the proof are to deal with the nonlinearity and overcome the main difficulty for obtaining the uniform bound of the solution $u_M^n(x)$, i.e., $\|u_M^n(x)\|_{L^\infty}\lesssim 1$. Since the EWI-FS \eqref{eq:u1}-\eqref{eq:EWI-FS} is explicit and the nonlinear term only depends on the previous steps, we adapt the energy method with suitable ``energy" combined with the method of mathematical induction, which is widely used in the literatures \cite{BC, BC1, BD, BDX, BDZ}. The nonlinear part is controlled by the $L^\infty$ norm of the error functions from previous steps by means of the discrete Sobolev inequality and inverse inequality.

The exact solution of the NKGE \eqref{eq:WNE_1D} can be written as
\begin{equation}
u(x, t) = \sum\limits_{l\in\mathbb{Z}} \widehat{u}_l(t) e^{i\mu_l(x-a)},\quad x \in \overline{\Omega},\quad t \geq 0,
\end{equation}
where $\widehat{u}_l(t) (l \in \mathbb{Z})$ are the Fourier transform coefficients of $u(x, t)$. Similar to the derivation of (\ref{eq:uhat0})-(\ref{eq:uhatn}), for $l\in\mathbb{Z}$, we have
\begin{equation}
\widehat{u}_l(\tau) = \widehat{\phi}_l \cos(\tau\zeta_l) +  \widehat{\gamma}_l \frac{\sin(\tau\zeta_l)}{\zeta_l} - \frac{\varepsilon^{2}}{\zeta_l}\int^{\tau}_0 \widehat{F}^0_l(\omega) \sin(\zeta_l(\tau-\omega))d \omega,
\label{eq:u0}
\end{equation}
and for $n\geq 1$,
\begin{equation}
\widehat{u}_l(t_{n+1}) = -\widehat{u}_l(t_{n-1}) + 2 \cos(\tau\zeta_l) \widehat{u}_l(t_n) - \frac{\varepsilon^{2}}{\zeta_l}\int^{\tau}_0 \widehat{(F_+^n)}_l(\omega)  \sin(\zeta_l(\tau-\omega))d \omega,
\label{eq:un}
\end{equation}
where
\begin{equation}
\widehat{(F_+^n)}_l(\omega)=\widehat{F}^n_l(-\omega) +  \widehat{F}^n_l(\omega),\quad \widehat{F}^n_l(\omega) = \widehat{(f(u))}_l(t_n+\omega).
\label{eq:G}
\end{equation}
For $0 \leq n \leq {T_0\varepsilon^{-\beta}}/{\tau}$, denote the ``error'' function
\begin{equation}
\eta^n(x) := P_M u(x, t_n) - u^n_M(x) =\sum_{l \in \mathcal{T}_M}\widehat{\eta}^n_le^{i\mu_l(x-a)},\ x \in \overline{\Omega},
\label{eq:en}
\end{equation}
with
$\widehat{\eta}^n_l = \widehat{u}_l(t_n) - \widehat{(u^n_M)}_l$, $l \in \mathcal{T}_M$. By the assumption (A) and triangle inequality, we have
\begin{equation}\nonumber
\|u(x, t_n) - u^n_M(x)\|_{\lambda} \leq \|u(x, t_n) - P_M u(x, t_n)\|_{\lambda} + \|\eta^n(x)\|_{\lambda} \lesssim h^{m_0-\lambda} + \|\eta^n(x)\|_{{\lambda}}.
\label{eq:eqvi_hat}
\end{equation}
Thus, we only need to estimate $\|\eta^n(x)\|_{\lambda}$ for $0 \leq n \leq {T_0\varepsilon^{-\beta}}/{\tau}$.

Now, we proceed to prove the error bounds in \eqref{eq:error_WNE} by employing the energy method combined with the method of mathematical induction in the following three main steps.

\emph{Step 1. The growth of the ``error" function.} Define the local truncation errors $\xi^{n+1}(x)$ for $0 \leq n \leq T_0\varepsilon^{-\beta}/\tau-1$ as
\begin{equation}\label{eq:xi_co}
\xi^{n+1}(x) =  \sum_{l \in \mathcal{T}_M}\widehat{\xi}^{n+1}_l e^{i\mu_l(x-a)},\end{equation}
with
\begin{align}
&\widehat{\xi}^1_l := \widehat{u}_l(\tau)- p_l\widehat{\phi}_l - q_l\widehat{\gamma_l} - r_l\widehat{(f(\phi))}_l=  -\frac{\varepsilon^{2}}{\zeta_l} \int^{\tau}_0 \widehat{W}_l^1(\omega)\sin(\zeta_l(\tau-\omega))d\omega,\nonumber\\
&\widehat{\xi}^{n+1}_l :=\widehat{u}_l(t_{n+1}) + \widehat{u}_l(t_{n-1})- 2p_l\widehat{u}_l(t_n)- 2r_l\widehat{(f(u))}_l(t_n),\nonumber\\
&\qquad\;=-\frac{\varepsilon^{2}}{\zeta_l} \int^{\tau}_0 \widehat{W}_l^{n+1}(\omega)\sin(\zeta_l(\tau-\omega))d\omega,\quad 1 \leq n \leq {T_0\varepsilon^{-\beta}}/{\tau}-1,\nonumber
\end{align}
where
\begin{equation}
\widehat{W}^{n+1}_l(\omega)=\left\{
\begin{aligned}
&\widehat{F}^0_l(\omega) - \widehat{(f(\phi))}_l,\quad{l \in \mathcal{T}_M},\quad n = 0, \\
&\widehat{(F_+^n)}_l(\omega) - 2\widehat{F}^n_l(0),\quad{l \in \mathcal{T}_M},\quad 1 \leq n \leq {T_0\varepsilon^{-\beta}}/{\tau}-1, \\
\end{aligned}
\right.
\label{eq:w}
\end{equation}
and the coefficients $p_l$, $q_l$ and $r_l$ are given in \eqref{eq:pql}.

For each ${l \in \mathcal{T}_M}$, subtracting \eqref{eq:EWI-FS} from (\ref{eq:u0})-(\ref{eq:un}), we obtain the equation for the ``error" function $\widehat{\eta}^{n+1}_l$ as
\begin{equation}
\begin{split}
&\widehat{\eta}^{n+1}_l = -\widehat{\eta}^{n-1}_l + 2\cos(\tau\zeta_l)\widehat{\eta}^{n}_l + \widehat{\xi}^{n+1}_l+\widehat{\chi}^{n+1}_l, \quad 1 \leq n \leq T_0\varepsilon^{-\beta}/{\tau}-1, \\ 
& \widehat{\eta}^{0}_l = 0, \quad \widehat{\eta}^{1}_l = \widehat{\xi}^1_l,
\end{split}
\label{eq:def_error_n}
\end{equation}
and the nonlinear term errors $\chi^{n+1}(x)\in X_M$ with
\begin{equation}
\widehat{\chi}^{n+1}_l := \frac{2\varepsilon^2(1-\cos(\tau\zeta_l))}{\zeta_l^2}\widehat{V}^{n+1}_l,\quad \widehat{V}^{n+1}_l=\widehat{(f(u^n_M))}_l - \widehat{F}^{n}_l(0).
\label{eq:def_psi}
\end{equation}

\emph{Step 2. Estimates for the cases $n=0$ and $n=1$.}
From the discretization of the initial data, i.e., $u^0_M(x) = P_M\phi(x)$, we have
\begin{equation*}
\|u(x, t= 0) - u^0_M(x)\|_{\lambda} = \|\phi - P_M\phi\|_{\lambda} \lesssim h^{m_0-\lambda},\quad\|u^0_M(x)\|_{L^{\infty}} \leq C h^{m_0-1} + M_1.
\end{equation*}
Therefore, there exists a constant $h_1 > 0$ sufficiently small and independent of $\varepsilon$ such that, when $0 < h \leq h_1$, the error estimates in (\ref{eq:error_WNE}) are valid for $n = 0$. 

Since the calculation for the first step ($n=1$) is different from others, we investigate the first step separately. Under the assumption $\rm{(A)}$, we get
\begin{equation}
\begin{split}
\|\phi^3-u^3(\cdot, \omega)\|^2  = & \int^b_a |u^3(x,0) - u^3(x, \omega)|^2 d x \\
 \leq &\ 9M^2_2\int^b_a |u(x,0) - u(x, \omega)|^2 d x \\
 = &\ 9M^2_2 \int^b_a \left|\int^{\omega}_0 \partial_s u(x, s) ds\right|^2d x \\
\leq &\ 9M^2_2 \int^{b}_{a} \omega \int^{\omega}_0 |\partial_s u(x, s)|^2 ds dx \\
\leq &\ 9M^2_2 \omega^2 \|\partial_t u(\cdot, t)\|^2_{L^{\infty}([0, T_{\varepsilon}];L^2)} \\\lesssim & \ \omega^2,\quad 0 \leq \omega \leq \tau.\\
\end{split}
\label{eq:es_f}
\end{equation}
Similarly, we have $\|\phi^3- u^3(\cdot, \omega)\|^2_{1}\lesssim{\omega^2},\ 0 \leq \omega \leq \tau.$

Under the condition \eqref{eq:con_tau}, we get
\begin{equation*}
0 < \tau \zeta_l \leq \frac{\pi}{3},\quad \frac{1}{2}\leq \cos(\zeta_l \tau) < 1, \quad 0 \leq \sin(\zeta_l(\tau - \omega)) \leq \sin(\zeta_l \tau) < 1,\quad 0 \leq \omega \leq \tau.	
\end{equation*}
Noticing $\widehat{\eta}^1_l=\widehat{\xi}^1_l$ and the definition of $\widehat{\xi}^1_l$, by the H\"older inequality, we obtain
\begin{equation}
\begin{split}
\left|\widehat{\eta}^1_l\right|^2 & = \left|\frac{\varepsilon^{2}}{\zeta_l} \int^{\tau}_0 \widehat{W}^1_l(\omega) \sin(\zeta_l(\tau-\omega))d\omega\right|^2 \\
& \leq \varepsilon^4 \int^{\tau}_0 \sin(\zeta_l(\tau-\omega)) d\omega \cdot  \int^{\tau}_0 \left|\widehat{W}^1_l(\omega) \right|^2  \sin(\zeta_l(\tau-\omega)) d\omega\\
& \leq \tau \varepsilon^4 \Big[1-\cos(\zeta_l \tau)\Big] \frac{\sin(\zeta_l \tau)}{\zeta_l \tau}\int^{\tau}_0 \left|\widehat{W}^1_l(\omega) \right|^2 d\omega. \\
& \leq \frac{1}{2}\tau \varepsilon^{4} \int^{\tau}_0 \left|\widehat{W}^1_l(\omega) \right|^2 d\omega.
\end{split}
\label{eq:eta_n1}
\end{equation}
Multiplying both sides of the above equalities by $(1+\mu_l^2)^{\lambda}$ and then summing up for ${l \in \mathcal{T}_M}$, the Parseval's identity equality, triangle inequality, \eqref{eq:w} and \eqref{eq:es_f} lead to
\begin{align*}
\|\eta^1(x)\|^2_{\lambda} & \leq \frac{1}{2} \tau {\varepsilon^{4}} \sum_{l \in \mathcal{T}_M} \left(1+\mu_l^{2}\right) ^{\lambda}\int^{\tau}_0 \left|\widehat{W}^1_l(\omega) \right|^2 d\omega \\
&= \frac{1}{2} \tau {\varepsilon^{4}}\sum_{l \in \mathcal{T}_M} \left(1+\mu_l^{2}\right) ^{\lambda} \int^{\tau}_0 \left| \widehat{(f(u))}_l(\omega)-\widehat{(f(\phi))}_l \right|^2 d\omega \\
&\lesssim \tau {\varepsilon^{4}}\int^{\tau}_0 \|u^3(\cdot, \omega) - \phi^3\|^2_{\lambda}d\omega\\
&\lesssim \tau^4 {\varepsilon^{4}},\quad \lambda=0,1.
\end{align*}
Thus, we immediately can obtain
\begin{equation}\nonumber
\|u(x, t_1) - u^1_M(x)\|_{\lambda}\leq \|u(x, t_1) - P_Mu(x, t_1)\|_{\lambda}+ \|\eta^1(x)\|_{\lambda}  \lesssim h^{m_0-\lambda}+\varepsilon^{2}{\tau^2}.
\end{equation}
By the triangle inequality and inverse inequality, there exist $h_2> 0$ and $\tau_1 > 0$ sufficiently small such that when $0 < h \leq h_2$ and $0 < \tau \leq \tau_1$, we have
\begin{equation}
\|u^1_M(x)\|_{L^{\infty}} \leq 1 + M_1.
\label{eq:u_infty}
\end{equation}
Therefore, the estimates in (\ref{eq:error_WNE}) are valid when $n = 1$.

\emph{Step 3. Estimates for the cases $2\leq n \leq T_0\varepsilon^{-\beta}/\tau$.} Assume that the estimates in (\ref{eq:error_WNE}) are valid for all $1 \leq n \leq m \leq T_0\varepsilon^{-\beta}/\tau - 1$, then we need to prove that they are still valid when $n = m+1$.

On the one hand, under the assumption $\rm{(A)}$, by the H\"older inequality, we have
\begin{equation}
\begin{split}	
&\|2u^3(\cdot, t_n)-u^3(\cdot, t_n-\omega)-u^3(\cdot, t_n+\omega)\|^2 \\
\leq &  \int^b_a \left|\int^{\omega}_0 \int^s_{-s} \partial_{\theta\theta} u^3(x, t_n + \theta)d\theta ds \right|^2 dx \\
\leq & \int^b_a \omega \int^{\omega}_0 2 s   \int^s_{-s} \left|\partial_{\theta\theta} u^3(x, t_n + \theta)\right|^2 d\theta ds dx \\
\leq &\int^{\omega}_0 2\omega s \int^s_{-s}\left(9M^2_2\|\partial_{\theta\theta} u(\cdot, t_n + \theta)\|^2 + 36M_2\|\partial_{\theta} u(\cdot, t_n + \theta)\|^4_{L^4}\right)d\theta ds \\
\lesssim &\  \omega^4 \left[\|\partial_{t} u(\cdot, t)\|^4_{L^{\infty}([0, T_{\varepsilon}]; L^4)}+\|\partial_{tt} u(\cdot, t)\|^2_{L^{\infty}([0, T_{\varepsilon}]; L^2)}\right] \\
\lesssim & \ \tau^4, \quad 0 \leq \omega \leq \tau.
\end{split}
\label{eq:fn}
\end{equation}
Similarly, we have  $\| u^3 (\cdot, t_n-\omega) + u^3(\cdot, t_n+\omega) - 2u^3(\cdot, t_n)\|^2_{1}\lesssim{\tau^4}, \ 0 \leq \omega \leq \tau$.

On the other hand, noticing the definitions of $\widehat{\xi}^{n+1}_l$ and $\widehat{\chi}^{n+1}_l$, similar to \eqref{eq:eta_n1}, we have
\begin{equation}
\begin{split}
\left|\widehat{\xi}^{n+1}_l\right|^2 & = \left|\frac{\varepsilon^{2}}{\zeta_l} \int^{\tau}_0 \widehat{W}^{n+1}_l(\omega) \sin(\zeta_l(\tau-\omega))d\omega\right|^2 \\
& \lesssim \tau {\varepsilon^{4}}\Big[1-\cos(\tau\zeta_l)\Big]\int^{\tau}_0 \left|\widehat{W}^{n+1}_l(\omega) \right|^2 d\omega,\quad {l \in \mathcal{T}_M},\quad 1 \leq n \leq m,\\	
\end{split}	
\label{eq:xi_n1}
\end{equation}
\begin{equation}
\begin{split}
|\widehat{\chi}^{n+1}_l|^2 &=\left|\frac{2\varepsilon^2(1-\cos(\tau\zeta_l))}{\zeta_l^2}\widehat{V}^{n+1}_l\right|^2 \\
& = \left|\frac{2\varepsilon^{2}}{\zeta_l} \widehat{V}^{n+1}_l  \int^{\tau}_0\sin(\zeta_l(\tau-\omega))d\omega\right|^2 \\
& \lesssim \tau^2 {\varepsilon^{4}}\Big[1-\cos(\tau\zeta_l)\Big] \left|\widehat{V}^{n+1}_l \right|^2,\quad l \in \mathcal{T}_M,\quad 1 \leq n \leq m.	
\end{split}
\label{eq:chi_n1}
\end{equation}

Multiplying the above inequalities with $(1+\mu^2_l)^{\lambda} (\lambda = 0, 1)$ , dividing them by $1-\cos(\tau\zeta_l)$ and then summing up ${l \in \mathcal{T}_M}$, for $1\leq n\leq m$, the Parseval's identity equality, triangle inequality,\eqref{eq:w}, \eqref{eq:def_psi} and \eqref{eq:fn} lead to
\begin{equation}
\begin{split}	
&\sum\limits_{l \in \mathcal{T}_M}\frac{(1+\mu_l^2)^{\lambda}}{1-\cos(\tau\zeta_l)} \left|\widehat{\xi}^{n+1}_l\right|^2 \\
\lesssim & \ \tau {\varepsilon^{4}} \sum_{l \in \mathcal{T}_M} \left(1+\mu_l^{2}\right)^{\lambda}\int^{\tau}_0 \left|\widehat{W}^{n+1}_l(\omega) \right|^2 d\omega \\
\lesssim &\ \tau {\varepsilon^{4}} \int^{\tau}_0  \|u^3(\cdot, t_n-\omega) + u^3(\cdot, t_n+\omega) - 2u^3(\cdot, t_n)\|^2_{\lambda}d\omega \\
\lesssim &\   \tau^6 {\varepsilon^{4}},
\end{split}
 \label{eq:xi_sn1}
 \end{equation}
\begin{equation}
\begin{split}
\sum\limits_{l \in \mathcal{T}_M}\frac{(1+\mu_l^2)^{\lambda}}{1-\cos(\tau\zeta_l)}\left|\widehat{\chi}^{n+1}_l\right|^2 \lesssim & \  \tau^2 {\varepsilon^4} \sum_{l \in \mathcal{T}_M}(1+\mu_l^2)^{\lambda}\left|\widehat{V}^{n+1}_l \right|^2 \\
\lesssim &\ \tau^2 {\varepsilon^4} \|u^3(\cdot, t_n) - (u^n_M)^3\|^2_{\lambda} \\
 \lesssim &\  \tau^2 {\varepsilon^4}M^2_2\|u(\cdot, t_n) - u^n_M\|^2_{\lambda} \\
\lesssim &\ \tau^2\varepsilon^4\left(h^{m_0-\lambda}+\varepsilon^{2-\beta}\tau^2\right)^2.
\end{split}
\label{eq:chi_sn1}
\end{equation}

Define the ``energy'' function as
\begin{equation}
\mathcal{E}^n = \sum_{l \in \mathcal{T}_M} \widehat{\mathcal{E}}^n_l,\;\widehat{\mathcal{E}}^n_l =(1+\mu_l^2)^{\lambda}\left[\left|\widehat{\eta}^{n}_l\right|^2+\left|\widehat{\eta}^{n+1}_l\right|^2  + \frac{\cos(\tau\zeta_l)}{1-\cos(\tau\zeta_l)}\left|\widehat{\eta}^{n+1}_l -\widehat{\eta}^{n}_l\right|^2\right].
\label{eq:def_E}
\end{equation}
For $n = 0$, we have 
\begin{equation*}
\mathcal{E}^0  =  \sum_{l \in \mathcal{T}_M} \frac{(1+\mu_l^2)^{\lambda}}{1-\cos(\tau\zeta_l)} \left|\widehat{\eta}^1_l\right|^2 \leq \tau {\varepsilon^{4}} \sum_{l \in \mathcal{T}_M} \left(1+\mu_l^{2}\right) ^{\lambda}\int^{\tau}_0 \left|\widehat{W}^1_l(\omega) \right|^2 d\omega \\
 \lesssim \tau^4\varepsilon^{4}.
\end{equation*}
Noticing $ 0 \leq \beta \leq 2$, multiplying both sides of \eqref{eq:def_error_n} by $\left(1+\mu^{2}_l\right)^{\lambda}\left(\widehat{\eta}^{n+1}_l\right.-$ $\left.\widehat{\eta}^{n-1}_l\right)$,  dividing it by $1-\cos(\tau\zeta_l)$ and summing up for ${l \in \mathcal{T}_M}$, the Young's inequality and \eqref{eq:xi_n1}-\eqref{eq:chi_sn1} result in
\begin{align*}
{\mathcal{E}^n} - {\mathcal{E}^{n-1}}  \leq \ & \sum_{l \in \mathcal{T}_M}\frac{(1+\mu_l^2)^{\lambda}}{1-\cos(\tau\zeta_l)}\left|\widehat{\xi}^{n+1}_l+\widehat{\chi}^{n+1}_l\right|\cdot \left|\widehat{\eta}^{n+1}_l - \widehat{\eta}^{n-1}_l\right|\\
 \leq & \sum_{l \in \mathcal{T}_M}\frac{(1+\mu_l^2)^{\lambda}}{1-\cos(\tau\zeta_l)}\left(2\varepsilon^{\beta}\tau \left|\widehat{\eta}^{n+1}_l - \widehat{\eta}^{n}_l\right|^2 + 2\varepsilon^{\beta}\tau \left|\widehat{\eta}^{n}_l - \widehat{\eta}^{n-1}_l\right|^2 \right.\\
 & \left.\quad\quad\quad\quad\quad \quad\quad \quad\ \ +\frac{1}{\varepsilon^{\beta}\tau}\left|\widehat{\xi}^{n+1}_l+\widehat{\chi}^{n+1}_l\right|^2\right)\\
\leq &  \sum_{l \in \mathcal{T}_M}\frac{4\varepsilon^{\beta}\tau\cos(\tau\zeta_l)}{1-\cos(\tau\zeta_l)}(1+\mu_l^2)^{\lambda}\left(\left|\widehat{\eta}^{n+1}_l - \widehat{\eta}^{n}_l\right|^2 + \left|\widehat{\eta}^{n}_l - \widehat{\eta}^{n-1}_l\right|^2 \right)\\
&+ \sum_{l \in \mathcal{T}_M}\frac{2(1+\mu_l^2)^{\lambda}}{\varepsilon^{\beta}\tau(1-\cos(\tau\zeta_l))}\left(\left|\widehat{\xi}^{n+1}_l\right|^2 + \left|\widehat{\chi}^{n+1}_l\right|^2\right)\\
\leq & \ 4\varepsilon^{\beta}\tau\left(\mathcal{E}^n+\mathcal{E}^{n-1}\right) + C\varepsilon^{4-\beta}\tau\left(h^{m_0-\lambda}+ \tau^2\right)^2,\quad  1 \leq n \leq m,
\end{align*}
where the constant $C$ is independent of $h$, $\tau$ and $\varepsilon$. Summing the above inequality for $n = 1, 2, \cdots, m$, and noticing the condition $\tau \leq 1/8$, we get
\begin{equation}
\mathcal{E}^m \lesssim \mathcal{E}^0 + \varepsilon^{\beta} \tau \sum^{m-1}_{n=0} \mathcal{E}^n +   T_0\varepsilon^{4-2\beta} \left(h^{m_0-\lambda}+ \tau^2\right)^2, \quad 1 \leq m\leq T_0\varepsilon^{-\beta}/\tau - 1.
\end{equation}
Hence, the Gronwall's inequality suggests that there exists a constant $\tau_2 > 0$ sufficiently small, such that when $0\leq\tau\leq \tau_2$, the following holds for $1 \leq m\leq T_0\varepsilon^{-\beta}/\tau - 1$,
\begin{equation}
\mathcal{E}^m \lesssim \mathcal{E}^0 +  \varepsilon^{4-2\beta}\left(h^{m_0-\lambda} + \tau^2\right)^2\lesssim\varepsilon^{4-2\beta}\left(h^{m_0-\lambda}+\tau^2\right)^2.
\label{eq:Em}
\end{equation}
Recalling the definition of $\mathcal{E}^m$ in \eqref{eq:def_E},  for $1 \leq m\leq T_0\varepsilon^{-\beta}/\tau - 1$, we can obtain the error estimate
\begin{equation*}
\|\eta^{m+1}\|^2_{\lambda}  = \sum_{l \in \mathcal{T}_M}(1+\mu^{2}_l)^{\lambda} \left|\widehat{\eta}^{m+1}_l\right|^2 \leq  \mathcal{E}^m\lesssim \varepsilon^{4-2\beta}\left(h^{m_0-\lambda}+\tau^2\right)^2,
\end{equation*}
by combining \eqref{eq:def_E} with the Parseval's identity equality and \eqref{eq:Em}, which immediately concludes that the first inequality in \eqref{eq:error_WNE} is valid for $n = m+1$.

Lastly, we have to prove the error estimate of $\|u^{m+1}_M(x)\|_{L^{\infty}}$ for $1\leq m\leq T_0\varepsilon^{-\beta}/\tau-1$. In fact, the inverse inequality and triangle inequality will imply that
there exist $h_3 > 0$ and  $\tau_3 > 0$ sufficiently small such that when $0 < h \leq h_3$ and $0 < \tau \leq \tau_3$, we have
\begin{equation*}
\|u^{m+1}_M(x)\|_{L^{\infty}} \leq \|u(x, t_{m+1})-u^{m+1}_M(x)\|_{L^{\infty}} +\|u(x, t_{m+1})\|_{L^\infty}\leq 1 + M_1.
\end{equation*}
Overall, under the choice of $h_0 = \min\{h_1, h_2, h_3\}$ and $\tau_0 = \min\{\tau_1, \tau_2,\tau_3\}$, the proof of Theorem \ref{thm:EWI-FS} is completed by the method of mathematical induction.

\section{Numerical results}
In this section, we present numerical results  of the EWI-FP \eqref{eq:u_n1}-\eqref{eq:EWI-FP} for the NKGE \eqref{eq:WNE_1D} with weak nonlinearity to support our error estimates. In our numerical experiments, we choose the initial data
\begin{equation}
\phi(x) = \frac{1}{2+\cos^2(x)} \quad  \mbox{and} \quad \gamma(x) = \sin(x), \quad x \in [0 ,2\pi].
\label{eq:long_initial}
\end{equation}
The numerical computation is carried out on a time interval $[0, T_0/\varepsilon^{\beta}]$ with $0 \leq \beta \leq 2$ and $T_0 > 0$ fixed. Here, we study the following three cases with respect to different $\beta$:

Case I. Fixed time dynamics up to the time at $O(1)$, i.e., $\beta = 0$;

Case II. Intermediate long-time dynamics up to the time at  $O(\varepsilon^{-1})$, i.e., $\beta = 1$;

Case III. Long-time dynamics up to the time at $O(\varepsilon^{-2})$, i.e., $\beta = 2$.

The ``exact" solution $u(x, t)$ is computed by the time-splitting Fourier pseudospectral method \cite{BFSU,DXZ} with a very fine mesh size $h_e = \pi/32$ and a very small time step $\tau_e = 5\times10^{-4}$. Denote $u^n_{h, \tau}$ as the numerical solution at $t=t_n$ by the EWI-FP \eqref{eq:u_n1}-\eqref{eq:EWI-FP} with mesh size $h$ and time step $\tau$. The errors are denoted as $e(x, t_n) \in X_M$ with $e(x, t_n) = u(x, t_n) - I_M(u^n_{h, \tau})(x)$. In order to quantify the numerical results, we measure the $H^1$ norm of $e(x, t_n)$.

The errors are displayed at $t = 1/\varepsilon^{\beta}$ with different $\varepsilon$ and $\beta$. In order to test the spatial errors, we fix the time step as $\tau = 5\times10^{-4}$ such that the temporal error can be ignored and solve the NKGE \eqref{eq:WNE_1D} under different mesh size $h$. Table \ref{tab:WNE_h} depicts the spatial errors for $\beta = 0$, $\beta = 1$ and $\beta = 2$. Then, we check the temporal errors for different $0 \leq \varepsilon \leq 1$ and $0 \leq \beta \leq 2$ with different time step $\tau$ and a very fine mesh size $h = \pi/32$ such that the spatial errors can be neglected. Tables \ref{tab:WNE_beta0_t}-\ref{tab:WNE_beta2_t} show the temporal errors for $\beta = 0$, $\beta = 1$ and $\beta = 2$, respectively.

\begin{table}
\caption{Spatial errors of the EWI-FP \eqref{eq:u_n1}-\eqref{eq:EWI-FP} for the NKGE \eqref{eq:WNE_1D} with \eqref{eq:long_initial} for different $\beta$ and $\varepsilon$.}
\centering
\begin{tabular}{c|ccccc}
\hline
&$\|e(\cdot,1/\varepsilon^{\beta})\|_{1}$ &\;\;\; $h_0 = \pi/2 $\;\;\; & \;\;\;$h_0/2 $ \;\;\;& \;\;\;$h_0/2^2 $ \;\;\; &\;\;\;$h_0/2^3$ \;\;\; \\
\hline
\multirow{5}{*}{$\beta=0$}
& $\varepsilon_0 = 1$ & 4.05E-2 & 8.80E-3 & 1.53E-4 & 7.19E-8 \\
&$\varepsilon_0 / 2$   & 4.78E-2 & 8.48E-3 & 1.58E-4 & 2.37E-8 \\
&$\varepsilon_0 / 2^2$ & 5.17E-2 & 8.36E-3 & 1.59E-4 & 1.15E-8 \\
&$\varepsilon_0 / 2^3$ & 5.28E-2 & 8.33E-3 & 1.59E-4 & 1.00E-8 \\
&$\varepsilon_0 / 2^4$ & 5.31E-2 & 8.32E-3 & 1.59E-4 & 9.89E-9 \\
\hline
\multirow{5}{*}{$\beta=1$}
& $\varepsilon_0 = 1$ & 4.05E-2 & 8.80E-3 & 1.53E-4 & 7.19E-8 \\
&$\varepsilon_0 / 2$ & 3.98E-2 & 6.27E-3 & 5.61E-5 & 4.19E-8 \\
&$\varepsilon_0 / 2^2$ & 1.57E-2 & 8.14E-3 & 1.33E-4 & 4.03E-8 \\
&$\varepsilon_0 / 2^3$ & 1.02E-2 & 3.17E-3 & 2.82E-5 & 1.08E-8 \\
&$\varepsilon_0 / 2^4$ & 6.08E-3 & 3.44E-3 & 1.41E-5 & 1.98E-8 \\
\hline
\multirow{5}{*}{$\beta=2$}
& $\varepsilon_0 = 1$ & 4.05E-2 & 8.80E-3 & 1.53E-4 & 7.19E-8 \\
&$\varepsilon_0 / 2$   & 4.04E-2 & 8.46E-3 & 1.40E-4 & 9.30E-8 \\
&$\varepsilon_0 / 2^2$ & 6.12E-2 & 4.18E-3 & 1.57E-5 & 6.90E-8 \\
&$\varepsilon_0 / 2^3$ & 1.01E-1 & 3.25E-3 & 1.45E-4 & 1.35E-7 \\
&$\varepsilon_0 / 2^4$ & 6.05E-2 & 1.31E-3  & 1.34E-4 & 4.16E-7 \\
\hline
\end{tabular}
\label{tab:WNE_h}
\end{table}

\begin{table}
\caption{Temporal errors of the EWI-FP \eqref{eq:u_n1}-\eqref{eq:EWI-FP} for the NKGE \eqref{eq:WNE_1D} with \eqref{eq:long_initial} and $\beta = 0$.}
\centering
\begin{tabular}{ccccccc}
\hline
$\|e(\cdot,1/\varepsilon^{\beta})\|_{1}$ &$\tau_0 = 0.2 $ & $\tau_0/2 $ &$\tau_0/2^2 $ & $\tau_0/2^3$ & $\tau_0/2^4$  & $\tau_0/2^5$ \\ 
\hline
$\varepsilon_0 = 1$ & 4.59E-2 & 1.13E-2 & 2.82E-3 & 7.04E-4 & 1.76E-4 & 4.37E-5 \\
order & - & 2.02 & 2.00 & 2.00 & 2.00 & 2.01 \\
\hline
$\varepsilon_0 / 2 $ & 1.48E-2 & 3.66E-3 & 9.11E-4 & 2.27E-4 & 5.68E-5 & 1.41E-5 \\
order & -  & 2.02 & 2.01 & 2.00 & 2.00 & 2.01 \\
\hline
$\varepsilon_0 / 2^2 $ & 4.05E-3 & 1.00E-3 & 2.49E-4 & 6.23E-5 & 1.55E-5 & 3.86E-6 \\
order & -  & 2.02 & 2.01 & 2.00 & 2.01 & 2.01 \\
\hline
$\varepsilon_0 / 2^3 $ & 1.04E-3 & 2.56E-4 & 6.39E-5 & 1.59E-5 & 3.98E-6 & 9.89E-7 \\
order & -  & 2.02 & 2.00 & 2.01 & 2.00 & 2.01 \\
\hline
$\varepsilon_0 / 2^4 $ & 2.61E-4 & 6.44E-5 & 1.61E-5 & 4.01E-6 & 1.00E-6 & 2.49E-7 \\
order & -  & 2.02 & 2.00 & 2.01 & 2.00 & 2.01 \\
\hline
$\varepsilon_0 / 2^5 $ & 6.53E-5 & 1.61E-5 & 4.02E-6 & 1.00E-6 & 2.51E-7 & 6.23E-8 \\
order & -  & 2.02 & 2.00 & 2.01 & 1.99 & 2.01 \\
\hline
\end{tabular}
\label{tab:WNE_beta0_t}
\end{table}

\begin{table}
\caption{Temporal errors of the EWI-FP \eqref{eq:u_n1}-\eqref{eq:EWI-FP} for the NKGE \eqref{eq:WNE_1D} with \eqref{eq:long_initial} and $\beta = 1$.}
\centering
\begin{tabular}{ccccccc}
\hline
$\|e(\cdot,1/\varepsilon^{\beta})\|_{1}$ &$\tau_0 = 0.2 $ & $\tau_0/2 $ &$\tau_0/2^2 $ & $\tau_0/2^3$ & $\tau_0/2^4$  & $\tau_0/2^5$ \\
\hline
$\varepsilon_0 = 1$ & 4.59E-2 & 1.13E-2 & 2.82E-3 & 7.04E-4 & 1.76E-4 & 4.37E-5 \\
order & - & 2.02 & 2.00 & 2.00 & 2.00 & 2.01 \\
\hline
$\varepsilon_0 / 2 $ & 1.30E-2 & 3.22E-3 & 8.04E-4 & 2.01E-4 & 5.02E-5 & 1.25E-5 \\
order & -  & 2.01 & 2.00 & 2.00 & 2.00 & 2.01 \\
\hline
$\varepsilon_0 / 2^2 $ & 5.76E-3 & 1.43E-3 & 3.56E-4 & 8.90E-5 & 2.23E-5 & 5.57E-6 \\
order & -  & 2.01 & 2.01 & 2.00 & 2.00 & 2.00 \\
\hline
$\varepsilon_0 / 2^3 $ & 2.30E-3 & 5.72E-4 & 1.43E-4 & 3.57E-5 & 8.92E-6 & 2.23E-6 \\
order & -  & 2.01 & 2.00 & 2.00 & 2.00 & 2.00 \\
\hline
$\varepsilon_0 / 2^4 $ & 1.66E-3 & 4.11E-4 & 1.03E-4 & 2.56E-5 & 6.41E-6 & 1.60E-6 \\
order & -  & 2.01 & 2.00 & 2.01 & 2.00 & 2.00 \\
\hline
$\varepsilon_0 / 2^5 $ & 4.18E-4 & 1.04E-4 & 2.59E-5 & 6.48E-6 & 1.62E-6 & 4.05E-7 \\
order & -  & 2.01 & 2.01 & 2.00 & 2.00 & 2.00 \\ 
\hline
\end{tabular}
\label{tab:WNE_beta1_t}
\end{table}

\begin{table}
\caption{Temporal errors of the EWI-FP \eqref{eq:u_n1}-\eqref{eq:EWI-FP} for the NKGE \eqref{eq:WNE_1D} with \eqref{eq:long_initial} and $\beta = 2$.}
\centering
\begin{tabular}{ccccccc}
\hline
$\|e(\cdot,1/\varepsilon^{\beta})\|_{1}$ &$\tau_0 = 0.2 $ & $\tau_0/2 $ &$\tau_0/2^2 $ & $\tau_0/2^3$ & $\tau_0/2^4$  & $\tau_0/2^5$ \\
\hline
$\varepsilon_0 = 1$ & 4.59E-2 & 1.13E-2 & 2.82E-3 & 7.04E-4 & 1.76E-4 & 4.37E-5 \\
order & - & 2.02 & 2.00 & 2.00 & 2.00 & 2.01 \\
\hline
$\varepsilon_0 / 2 $ & 3.17E-2 & 7.83E-3 & 1.95E-3 & 4.88E-4 & 1.22E-4 & 3.04E-5 \\
order & -  & 2.02 & 2.01 & 2.00 & 2.00 & 2.00 \\
\hline
$\varepsilon_0 / 2^2 $ &2.51E-2 & 6.23E-3 & 1.55E-3 & 3.88E-4 & 9.70E-5 & 2.42E-5 \\
order & -  & 2.01 & 2.01 & 2.00 & 2.00 & 2.00 \\
\hline
$\varepsilon_0 / 2^3 $ & 3.28E-2 & 8.14E-3 & 2.03E-3 & 5.08E-4 & 1.27E-4 & 3.17E-5 \\
order & -  & 2.01 & 2.00 & 2.00 & 2.00 & 2.00 \\
\hline
$\varepsilon_0 / 2^4 $ & 2.50E-2 & 6.23E-3 & 1.56E-3 & 3.89E-4 & 9.72E-5 & 2.43E-5 \\
order & -  & 2.00 & 2.00 & 2.00 & 2.00 & 2.00 \\
\hline
$\varepsilon_0 / 2^5 $ & 2.88E-2 & 7.17E-3 & 1.79E-3 & 4.47E-4 & 1.12E-4 & 2.79E-5 \\
order & -  & 2.01 & 2.00 & 2.00 & 2.00 & 2.01 \\
\hline
\end{tabular}
\label{tab:WNE_beta2_t}
\end{table}

From Tables \ref{tab:WNE_h}-\ref{tab:WNE_beta2_t} and additional similar numerical results not shown here for brevity, we can draw the following observations on the EWI-FP \eqref{eq:u_n1}-\eqref{eq:EWI-FP} for the NKGE \eqref{eq:WNE_1D} up to the time at $O(\varepsilon^{-\beta})$ with $0\leq\beta\leq 2$:

(i) In space, the EWI-FP \eqref{eq:u_n1}-\eqref{eq:EWI-FP} is uniformly spectral accurate for any $0 < \varepsilon \leq 1$ and $0 \leq \beta \leq 2$ (cf. each row in Table \ref{tab:WNE_h}) and the spatial errors are almost independent of $\varepsilon$ (cf. each column in Table \ref{tab:WNE_h}).

(ii) In time, for any fixed $\varepsilon = \varepsilon_0>0$, the EWI-FP \eqref{eq:u_n1}-\eqref{eq:EWI-FP} is uniformly second-order accurate (cf. the first rows in Tables \ref{tab:WNE_beta0_t}-\ref{tab:WNE_beta2_t}), which agree with those results in the literature. In addition, Tables \ref{tab:WNE_beta0_t}-\ref{tab:WNE_beta2_t} illustrate that the error bounds of temporal discretization for the EWI-FP \eqref{eq:u_n1}-\eqref{eq:EWI-FP} uniformly behave like $O(\varepsilon^2\tau^2)$ for the fixed time dynamics up to the time at $O(1)$, i.e., $\beta=0$ (cf. each row and column in Table \ref{tab:WNE_beta0_t}), and  $O(\varepsilon\tau^2)$ for the intermediate long-time dynamics up to the time at $O(\varepsilon^{-1})$, i.e., $\beta=1$ (cf. each row and column in Table \ref{tab:WNE_beta1_t}), and resp. $O(\tau^2)$ for the long-time dynamics up to the time at $O(\varepsilon^{-2})$, i.e., $\beta=2$ (cf. each row and column in Table \ref{tab:WNE_beta2_t}). In summary, our numerical results confirm the error bounds in Theorem \ref{thm:EWI-FS} and demonstrate that they are sharp.

\section{Extension to an oscillatory NKGE}
By a rescaling in time $s = \varepsilon^{\beta}t$ with $0\leq \beta \leq 2$ and denoting $v({\bf{x}}, s) := u({\bf{x}}, s/\varepsilon^{\beta}) = u({\bf{x}}, t)$ in the NKGE \eqref{eq:WNE}, we can reformulate the NKGE \eqref{eq:WNE} into the following oscillatory NKGE
\begin{equation}
\left\{
\begin{aligned}
&\varepsilon^{2\beta}\partial_{ss} v({\bf x}, s) - \Delta v({\bf x}, s) + v({\bf x}, s) + \varepsilon^2 v^3({\bf x}, s) = 0,\quad x \in \mathbb{T}^d ,\quad s >0,\\
&v({\bf x} , 0) = \phi({\bf x}),\quad \partial_s v({\bf x}, 0) = \varepsilon^{-\beta}\gamma({\bf x}),\quad x \in \mathbb{T}^d,
\end{aligned}\right.
\label{eq:HOE}
\end{equation}
which is also time symmetric or time reversible and conserves the energy, i.e.,
\begin{equation*}
\begin{split}
\mathcal{E}(s) &:= \int_{{\mathbb{T}}^d} \left[ \varepsilon^{2\beta}|\partial_s v ({\bf{x}}, s)|^2 + |\nabla v({\bf{x}}, s)|^2 + |v({\bf{x}}, s)|^2 +\frac{\varepsilon^2}{2}|v({\bf{x}}, s)|^4  \right] d {\bf{x}} \\
& \equiv  \int_{{\mathbb{T}}^d} \left[ |\gamma({\bf{x}})|^2 + |\nabla \phi({\bf{x}})|^2 + |\phi({\bf{x}})|^2 + \frac{\varepsilon^2}{2}|\phi({\bf{x}})|^4 \right] d {\bf{x}} \\
& = E(0) = O(1), \qquad s \geq 0.
\end{split}
\end{equation*}

Recently, more attentions are paid to the properties of the highly oscillatory evolution equations \cite{BD, BDZ, BFS, CCLMV, FS}. The solution of the NKGE \eqref{eq:HOE} propagates waves with wavelength at $O(1)$ in space and $O(\varepsilon^{\beta})$ in time, respectively \cite{BFY}. To illustrate this, Figures \ref{fig:vt}-\ref{fig:vx} show the solutions $v(\pi, s)$ and $v(x,1)$ of the oscillatory NKGE \eqref{eq:HOE} with $d=1$, $\mathbb{T}^1=(0,2\pi)$ and initial data \eqref{eq:long_initial} for different $0<\varepsilon\le 1$ and $\beta$. It is important to point out that the temporal oscillation frequency depends on the space Fourier variable and the wave speed of the oscillatory NKGE \eqref{eq:HOE} is at $O(\varepsilon^{-\beta})$, which is quite different from that of the NKGE in the nonrelativistic limit regime considered in \cite{BD, BDZ, BFS, FS}. In fact, in the nonrelativistic limit regime, the solution of the NKGE  propagates waves with wavelength at $O(1)$ in space and $O(\varepsilon^2)$ in time, and wave speed in space at $O(1)$.

\begin{figure}[ht!]
\begin{minipage}{0.48\textwidth}
\centerline{\includegraphics[width=6.8cm,height=4.5cm]{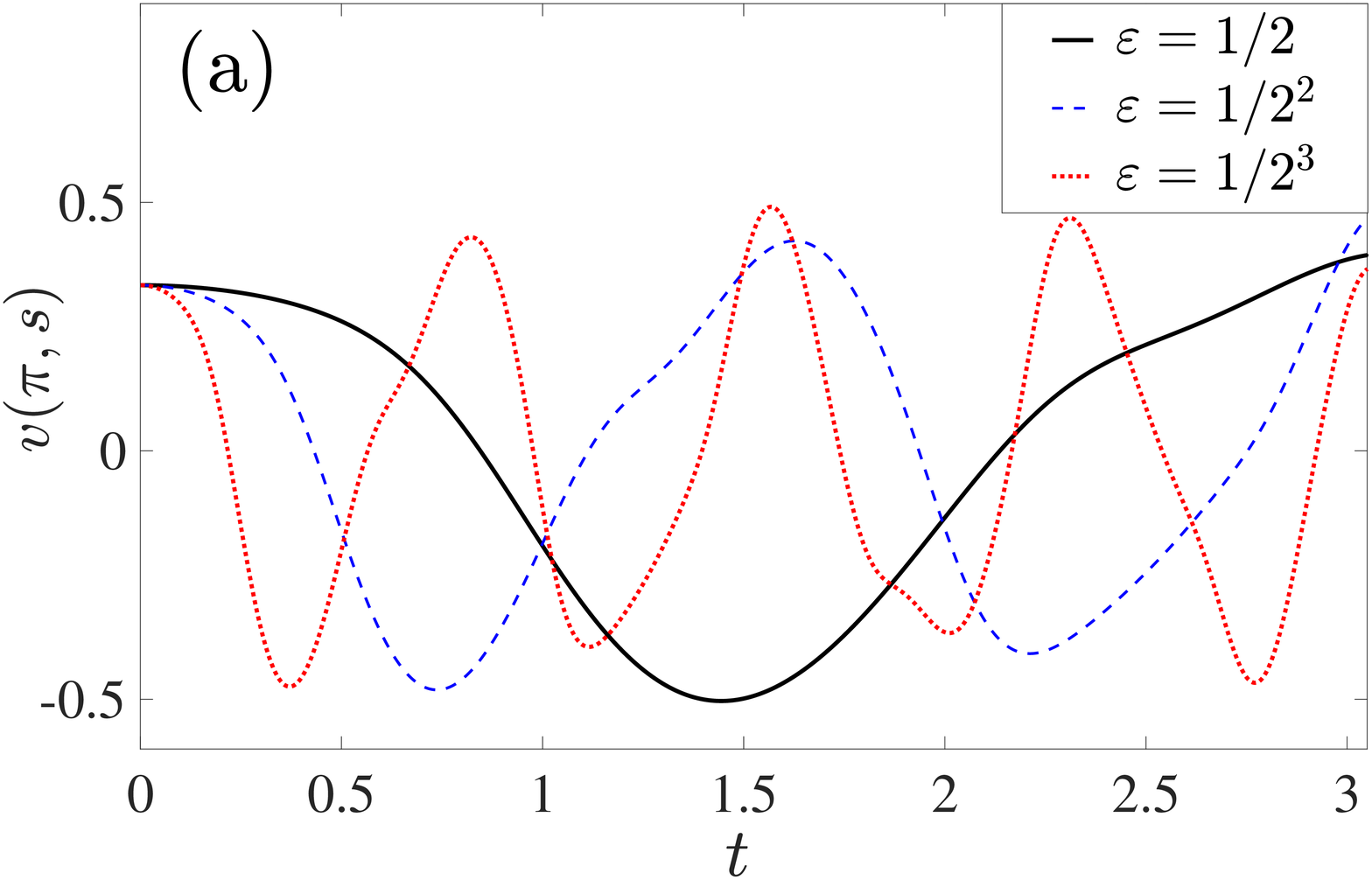}}
\end{minipage}
\begin{minipage}{0.48\textwidth}
\centerline{\includegraphics[width=6.8cm,height=4.5cm]{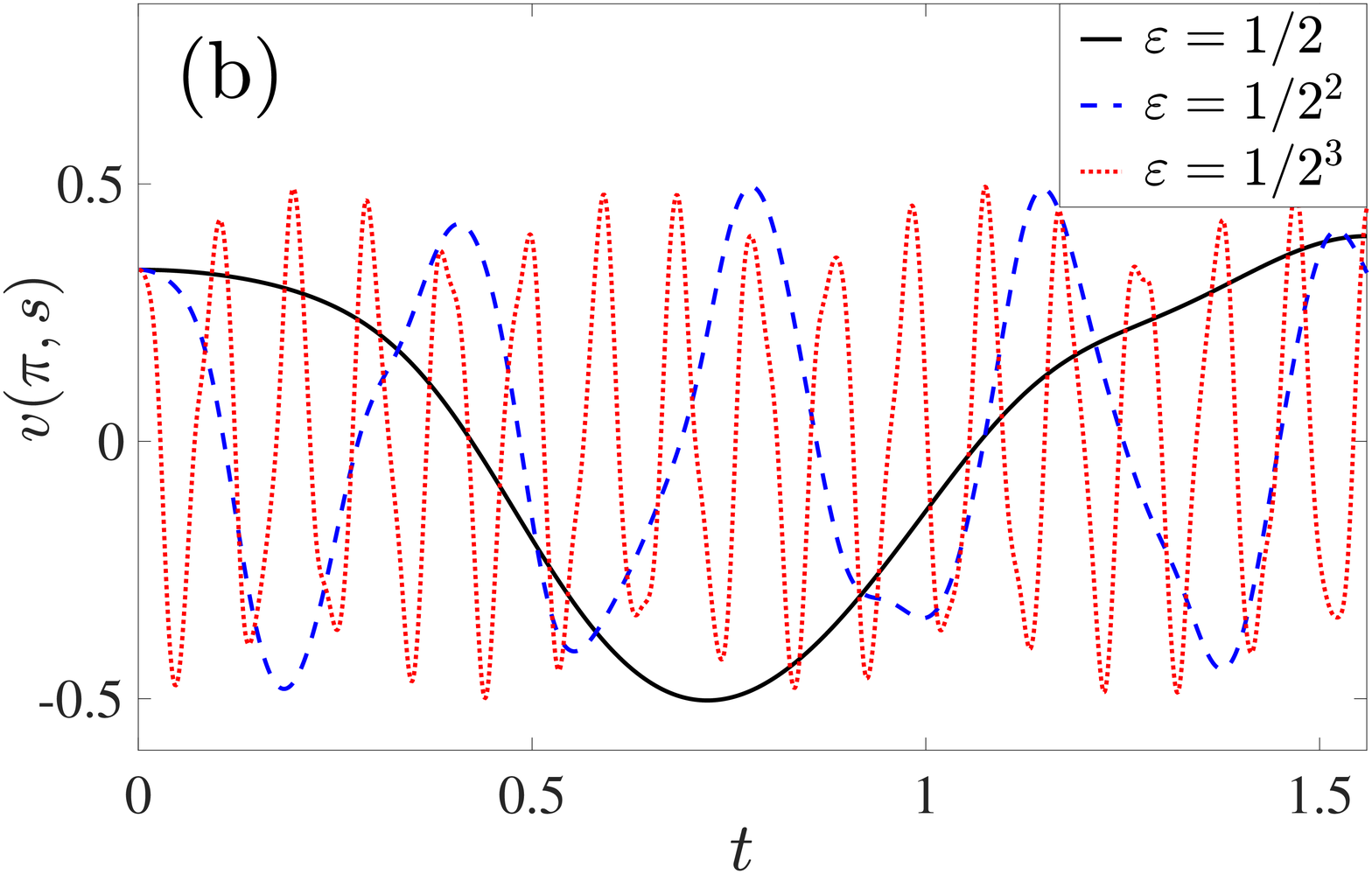}}
\end{minipage}
\vspace{-0.1in}
\caption{The solution $v(\pi, s)$ of the oscillatory NKGE (\ref{eq:HOE}) with $d=1$ and initial data \eqref{eq:long_initial} for different $\varepsilon$ and $\beta$: (a) $\beta = 1$, (b) $\beta = 2$.}
\label{fig:vt}
\end{figure}
\begin{figure}[ht!]
\begin{minipage}{0.48\textwidth}
\centerline{\includegraphics[width=6.8cm,height=4.5cm]{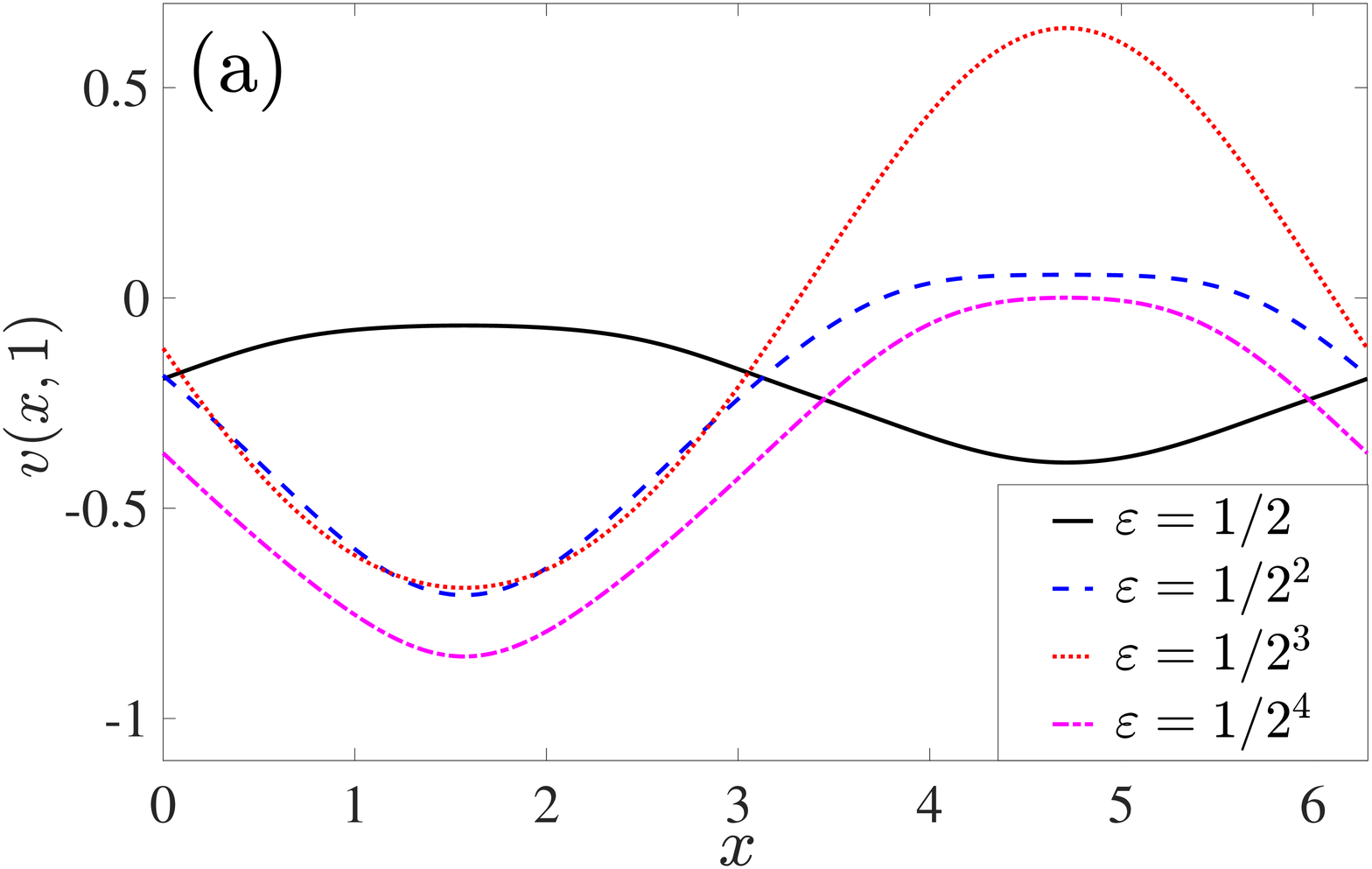}}
\end{minipage}
\begin{minipage}{0.48\textwidth}
\centerline{\includegraphics[width=6.8cm,height=4.5cm]{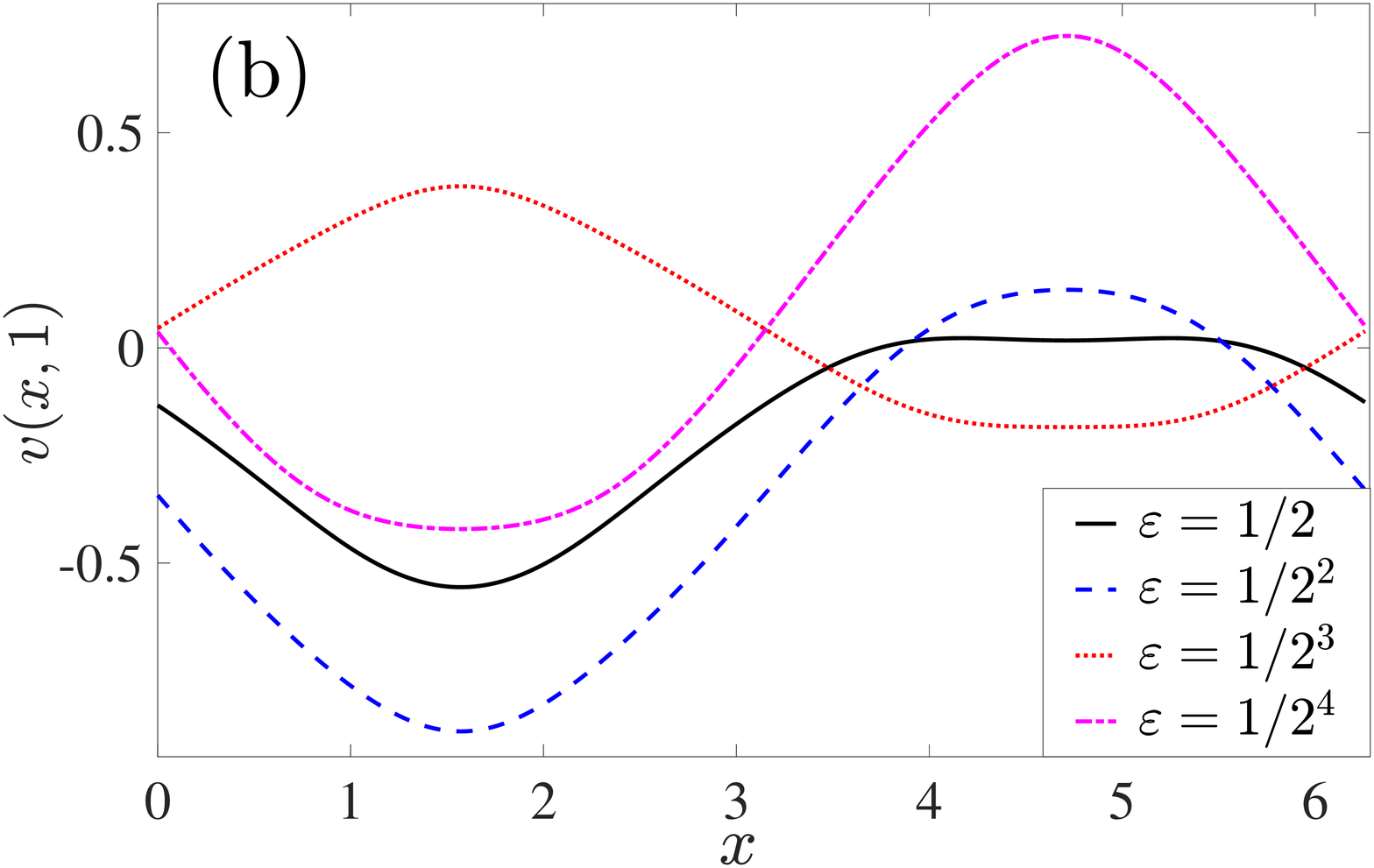}}
\end{minipage}
\vspace{-0.1in}
\caption{The solution $v(x, 1)$ of the oscillatory NKGE (\ref{eq:HOE}) with  $d=1$ and initial data \eqref{eq:long_initial} for different $\varepsilon$ and $\beta$: (a) $\beta = 1$, (b) $\beta = 2$.}
\label{fig:vx}
\end{figure}

The dynamics of the oscillatory NKGE \eqref{eq:HOE} up to the fixed time at $O(1)$ is equivalent to the long-time dynamics of the NKGE \eqref{eq:WNE} with weak nonlinearity up to the time at $O(\varepsilon^{-\beta})$. The analysis of the EWI-FS/EWI-FP method and the error estimates for the NKGE \eqref{eq:WNE} with weak nonlinearity in Sections 2$\&$3 can be extended for the oscillatory NKGE \eqref{eq:HOE}. Again, for simplicity of notations, the EWI-FS/EWI-FP method and the error estimates will be only presented in 1D and the results can be easily generalized to higher dimensions with minor modifications. In addition, the proofs for the error bounds are quite similar to those in Section 3 and we omit the details here for brevity. We adopt similar notations to those used in Sections 2\&3 except specified otherwise. In 1D, we consider the following oscillatory NKGE
\begin{equation}
\left\{
\begin{aligned}
&\varepsilon^{2\beta}\partial_{ss} v(x, s) - \partial_{xx} v(x, s) + v(x, s) + \varepsilon^{2} v^3(x, s) = 0,\ x \in \Omega = (a, b),\ s > 0,\\
&v(x, 0) = \phi(x), \quad \partial_s v(x, 0) = \varepsilon^{-\beta} \gamma(x) , \quad x \in \overline{\Omega} = [a, b].
\label{eq:HOE_1D}
\end{aligned}\right.
\end{equation}

\subsection{EWI-FS/EWI-FP method}

Let $k = \Delta s > 0$ be the temporal step size and denote time steps as $s_n: = n k$ for $n \geq 0$. The Fourier spectral method for the oscillatory NKGE (\ref{eq:HOE_1D}) is to find $v_M(x, s) \in X_M$, i.e.,
\begin{equation}
v_M(x, s) = \sum_{l \in \mathcal{T}_M} \widehat{(v_M)}_l(s) e^{i\mu_l(x-a)}, \quad x \in \overline{\Omega}, \quad s \geq 0,
\label{eq:vm}
\end{equation}
such that
\begin{equation}
\varepsilon^{2\beta}\partial_{ss} v_M(x, s) - \partial_{xx} v_M(x, s) + v_M(x, s) + \varepsilon^2 P_M f(v_M(x, s) )= 0,\ x \in \overline{\Omega},\ s \geq 0,
\label{eq:51vm}
\end{equation}
with $f(v) = v^3$. The derivations of the EWI-FS/EWI-FP discretization for the oscillatory NKGE (\ref{eq:HOE_1D}) proceed in the analogous lines as those in Section 2 and we omit the details for brevity. Denote $\widehat{(v^n_M)}_l$ and $v^n_M(x)$ be the approximations of $\widehat{(v_M)}_l(s_n)$ and $v_M(x, s_n)$, respectively. Choosing $v^0_M(x) = (P_M \phi)(x)$, the Gautschi-type exponential wave integrator Fourier spectral (EWI-FS) discretization for the oscillatory NKGE  (\ref{eq:HOE_1D}) is
\begin{equation}
v^{n+1}_M(x) = \sum_{l \in \mathcal{T}_M} \widehat{(v^{n+1}_M)}_l e^{i\mu_l(x-a)},\quad x \in \overline{\Omega},\quad n\geq 0,
\label{eq:v1}
\end{equation}
where
\begin{equation}
\begin{split}
&\widehat{(v^1_M)}_l =\bar{p}_l\widehat{\phi}_l + \bar{q}_l \widehat{\gamma_l} + \bar{r}_l \widehat{(f(\phi))}_l, \quad l \in \mathcal{T}_M, \\
&\widehat{(v^{n+1}_M)}_l = -\widehat{(v^{n-1}_M)}_l + 2\bar{p}_l\widehat{(v^n_M)}_l + 2\bar{r}_l\widehat{(f(v^n_M))}_l,\quad {l \in \mathcal{T}_M},\quad n \geq 1,
\end{split}
\label{eq:EWI-FS_HOE}
\end{equation}
with $\bar{\zeta}_l = {\varepsilon^{-\beta}}\sqrt{1 + \mu^2_l }=O(\varepsilon^{-\beta})$ and the coefficients given as
\begin{equation}
\bar{p}_l= \cos(k\bar{\zeta}_l), \quad  \bar{q}_l = \frac{\sin(k\bar{\zeta}_l)}{\varepsilon^{\beta}\bar{\zeta}_l}, \quad \bar{r}_l =\frac{\varepsilon^2(\cos(k\bar{\zeta}_l)-1)}{(\varepsilon^{\beta}\bar{\zeta}_l)^2}.
\label{eq:barpqr}
\end{equation}

Similarly, let $v^n_j$ be the approximation of $v(x_j, s_n)$ and denote $v^0_j = \phi(x_j)$  $(j = 0, 1, \cdots, M)$, then we can obtain the following Gautschi-type exponential wave integrator Fourier pseudospectral (EWI-FP) discretization for the oscillatory NKGE (\ref{eq:HOE_1D}) as
\begin{equation}
v^{n+1}_j = \sum_{l \in \mathcal{T}_M} \widetilde{v}^{n+1}_l e^{i\mu_l(x_j-a)},\quad j = 0, 1, \cdots, M,\quad n\geq 0,
\label{eq:v_n1}
\end{equation}
where
\begin{equation}
\begin{split}
&\widetilde{v}^1_l = \bar{p}_l\widetilde{\phi}_l + \bar{q}_l\widetilde{\gamma_l} + \bar{r}_l\widetilde{(f(\phi))}_l, \quad l \in \mathcal{T}_M, \\
&\widetilde{v}^{n+1}_l= -\widetilde{v}^{n-1}_l + 2\bar{p}_l\widetilde{v}^n_l + 2\bar{r}_l\widetilde{(f(v^{n}))}_l,\quad{l \in \mathcal{T}_M},\quad n \geq 1,
\end{split}
\label{eq:EWI-FP_HOE}
\end{equation}
with the coefficients $\bar{p}_l$, $\bar{q}_l$ and $\bar{r}_l$ are given in \eqref{eq:barpqr}.

The EWI-FP \eqref{eq:v_n1}-\eqref{eq:EWI-FP_HOE} is also explicit, time symmetric and easy to extend to 2D and 3D. The memory cost is $O (M)$ and the computational cost per time step is $O (M {\rm log} M )$. Similar to Lemma \ref{thm:stability}, we have the following stability result for the EWI-FP \eqref{eq:v_n1}-\eqref{eq:EWI-FP_HOE} with the proof omitted here for brevity.

\begin{lemma} (stability)
Let $T_0 > 0$ be a fixed constant and denote
\begin{equation}
\bar{\sigma}_{\max}:=\max_{0 \leq n \leq T_0/k} \|v^n\|^2_{l^{\infty}}.
\end{equation}
The EWI-FP \eqref{eq:v_n1}-\eqref{eq:EWI-FP_HOE} is conditionally stable under the stability condition
\begin{equation}
 0 < k \leq \frac{2\varepsilon^{\beta}h}{\sqrt{\pi^2+h^2(1+\varepsilon^2\bar{\sigma}_{\max})}}, \quad h > 0,\quad 0 < \varepsilon \leq 1.	
\label{eq:stability_v} 
\end{equation}
\label{thm:stability_v}
\end{lemma}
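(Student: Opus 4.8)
The plan is to mirror the von Neumann stability analysis of Lemma~\ref{thm:stability}, adapting it to the rescaled dispersion relation induced by the time rescaling $s=\varepsilon^{\beta}t$. First I would freeze the cubic nonlinearity, replacing $f(v)=v^3$ with the linearized coefficient $\varepsilon^2\bar{\sigma}_{\max}v$, so that the term $2\bar{r}_l\widetilde{(f(v^n))}_l$ in \eqref{eq:EWI-FP_HOE} becomes a multiple of $\widetilde{v}^n_l$; here $\bar{\sigma}_{\max}$ is precisely the quantity whose boundedness (recovered from the error control of Theorem~\ref{thm:EWI-FS}) makes this linearization legitimate. Substituting the single-mode ansatz
\begin{equation*}
v^{n-1}_j=\sum_{l\in\mathcal{T}_M}\widehat{U_l}\,e^{2ijl\pi/M},\quad v^{n}_j=\sum_{l\in\mathcal{T}_M}\xi_l\widehat{U_l}\,e^{2ijl\pi/M},\quad v^{n+1}_j=\sum_{l\in\mathcal{T}_M}\xi_l^2\widehat{U_l}\,e^{2ijl\pi/M}
\end{equation*}
into \eqref{eq:EWI-FP_HOE} and using orthogonality of the discrete Fourier modes yields, for each $l\in\mathcal{T}_M$, the characteristic equation $\xi_l^2-2\bar{\theta}_l\xi_l+1=0$.

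Next I would compute the amplification symbol explicitly. Recalling $\bar{\zeta}_l=\varepsilon^{-\beta}\zeta_l$ with $\zeta_l=\sqrt{1+\mu_l^2}$ and the coefficients \eqref{eq:barpqr}, the denominator $(\varepsilon^{\beta}\bar{\zeta}_l)^2$ in $\bar{r}_l$ collapses to $\zeta_l^2=1+\mu_l^2$, so that
\begin{equation*}
\bar{\theta}_l=\cos(k\bar{\zeta}_l)+\frac{\varepsilon^2\bar{\sigma}_{\max}\bigl(\cos(k\bar{\zeta}_l)-1\bigr)}{\zeta_l^2}=1-\left(\frac{2\varepsilon^2\bar{\sigma}_{\max}}{\zeta_l^2}+2\right)\sin^2\!\left(\frac{k\bar{\zeta}_l}{2}\right).
\end{equation*}
Since $\xi_l=\bar{\theta}_l\pm\sqrt{\bar{\theta}_l^2-1}$, stability $|\xi_l|\le 1$ is again equivalent to $|\bar{\theta}_l|\le 1$ for every $l\in\mathcal{T}_M$, that is, to $0\le\bigl(2\varepsilon^2\bar{\sigma}_{\max}/\zeta_l^2+2\bigr)\sin^2(k\bar{\zeta}_l/2)\le 2$.

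To close the estimate I would invoke $\sin x\le x$ for $x\ge 0$ together with the extremal frequency bound $\mu_l^2\le \pi^2/h^2$, attained at $l=-M/2$ since $\mu_{-M/2}=-\pi/h$. Writing $\bar{\zeta}_l^2=\varepsilon^{-2\beta}(1+\mu_l^2)$ inside the sine gives
\begin{equation*}
\left(\frac{2\varepsilon^2\bar{\sigma}_{\max}}{\zeta_l^2}+2\right)\sin^2\!\left(\frac{k\bar{\zeta}_l}{2}\right)\le\frac{k^2}{2\varepsilon^{2\beta}}\bigl((1+\mu_l^2)+\varepsilon^2\bar{\sigma}_{\max}\bigr)\le\frac{k^2\bigl(\pi^2+h^2(1+\varepsilon^2\bar{\sigma}_{\max})\bigr)}{2\varepsilon^{2\beta}h^2},
\end{equation*}
and requiring the right-hand side to be at most $2$ is, after taking square roots, exactly the stated condition \eqref{eq:stability_v}, which would complete the argument.

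The only genuinely new bookkeeping relative to Lemma~\ref{thm:stability}, and thus the main place to be careful, is the fast rescaling encoded in $\bar{\zeta}_l=\varepsilon^{-\beta}\zeta_l$: one must keep the factor $\varepsilon^{-\beta}$ inside the trigonometric argument $k\bar{\zeta}_l$ while expressing the denominators through $\zeta_l^2=1+\mu_l^2$. It is precisely the surviving $\varepsilon^{2\beta}$ in the numerator of the frequency bound that produces the factor $\varepsilon^{\beta}$ in the numerator of \eqref{eq:stability_v}, reflecting that the $O(\varepsilon^{-\beta})$ wave speed of the oscillatory NKGE \eqref{eq:HOE_1D} forces a proportionally smaller admissible time step $k$.
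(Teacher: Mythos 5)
Your proposal is correct and follows exactly the route the paper intends: the paper omits this proof as being the same von Neumann analysis as Lemma~\ref{thm:stability}, and your adaptation---linearizing the cubic term via $\bar{\sigma}_{\max}$, deriving $\bar{\theta}_l$ with the denominator $(\varepsilon^{\beta}\bar{\zeta}_l)^2=\zeta_l^2$, and bounding $\sin^2(k\bar{\zeta}_l/2)$ with $\mu_l^2\le\pi^2/h^2$ to recover the $\varepsilon^{\beta}$ factor in \eqref{eq:stability_v}---is precisely that argument carried out correctly. The only cosmetic point is that ``replacing $f(v)$ by $\varepsilon^2\bar{\sigma}_{\max}v$'' taken literally would double-count the $\varepsilon^2$ already present in $\bar{r}_l$, but your displayed $\bar{\theta}_l$ has the correct single factor, matching the paper's own (equally loose) phrasing in the proof of Lemma~\ref{thm:stability}.
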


\subsection{Main results}
In this section, we will establish the error estimates for the oscillatory NKGE \eqref{eq:HOE_1D}. Let $0 < T_0 < T^{\ast}$ with $T^{\ast}$ the maximum existence time of the solution. Again, motivated by the analytical results of the oscillatory NKGE \eqref{eq:HOE_1D} and the assumption ${\rm (A)}$, we make some assumptions on the exact solution $v(x, s)$ of the oscillatory NKGE \eqref{eq:HOE_1D}:

\begin{equation*}
\rm{(B)}\hspace{-1.5cm}
\begin{split}
 & v(x, s) \in  L^{\infty}\left([0, T_0]; H^{m_0}_p\right), \ \partial_s v(x, s) \in L^{\infty}\left([0, T_0]; W^{1,4}\right), \\
 & \partial_{ss} v(x, s) \in L^{\infty}\left([0, T_0]; H^1\right),\\
&\|v(x, s)\|_{L^{\infty}([0, T_0]; H^{m_0}_p)} \lesssim 1, \ \|\partial_s v(x, s)\|_{L^{\infty}([0, T_0]; W^{1,4})} \lesssim \frac{1}{\varepsilon^{\beta}},\\
&\|\partial_{ss} v(x, s)\|_{L^{\infty}([0, T_0]; H^1)} \lesssim  \frac{1}{\varepsilon^{2\beta}},\quad m_0 \geq 2.
\end{split}
\end{equation*}
Under the assumption (${B}$), let
\begin{equation}\nonumber
\begin{split}
& \overline{M_1} := \max_{\varepsilon \in (0, 1]}\left\{ \|v(x, s)\|_{L^{\infty}([0, T_0]; L^{\infty})} + \varepsilon^{\beta}\|\partial_s v(x, s)\|_{L^{\infty}([0, T_0]; L^{\infty})}\right\} \lesssim 1,\\
&\overline{M_2} := \sup_{v \neq 0, |v| \leq 1 + \overline{M_1}} |v|^2 \lesssim 1.
\end{split}
\end{equation}
Assuming
\begin{equation}
k \leq  \min \left\{\frac{1}{4}\varepsilon^{\beta}, \frac{\varepsilon^{\beta}\pi h}{3\sqrt{ h^2+\pi^2 + \varepsilon^2 \overline{M_2} h^2}}\right\},\quad 0 < \varepsilon \leq 1, \quad 0 \leq \beta \leq 2,
\label{eq:con_k}
\end{equation}
taking $\tau=k\varepsilon^{-\beta}$ and noticing the error bounds in Theorem \ref{thm:EWI-FS}, we can immediately obtain the error bounds of the  EWI-FS \eqref{eq:v1}-\eqref{eq:EWI-FS_HOE} (The results for the EWI-FP \eqref{eq:v_n1}-\eqref{eq:EWI-FP_HOE} are quite similar and the details are skipped here for brevity):
\begin{theorem}
Let $v^n_M(x)$ be the approximation obtained from the EWI-FS \eqref{eq:v1}-\eqref{eq:EWI-FS_HOE}, under the stability condition \eqref{eq:stability_v} and the assumptions (B) and (\ref{eq:con_k}), there exist constants $h_0> 0$ and $k_0 > 0$ sufficiently small and independent of $\varepsilon$, such that for any $0 < \varepsilon \leq 1$ and $0 \leq \beta \leq 2$, when $0 < h \leq h_0$, $0 < k \leq \varepsilon^{\alpha^{\ast}}k_0$ with $\alpha^{\ast} = \max\{0, 3\beta/2-1\}$, we have
\begin{equation}
\begin{split}
&\|v(x, s_n) - v^n_M(x)\|_{\lambda} \lesssim h^{m_0-\lambda} + \varepsilon^{2-3\beta}{k^2},\quad \lambda=0,1,\\
& \|v^n_M(x)\|_{L^{\infty}} \leq 1 + \overline{M_1}, \quad 0 \leq n \leq \frac{T_0}{k}.
\end{split}\label{eq:error_HOE}
\end{equation}
\label{thm:EWI-FS_HOE}
\end{theorem}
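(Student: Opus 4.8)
The plan is to prove Theorem~\ref{thm:EWI-FS_HOE} by reducing it, via the time rescaling $s=\varepsilon^{\beta}t$, to the already-established Theorem~\ref{thm:EWI-FS}, so that no new energy estimate is needed. First I would record the exact correspondence between the two problems. Setting $v(x,s)=u(x,s/\varepsilon^{\beta})$ turns the oscillatory NKGE \eqref{eq:HOE_1D} back into the NKGE \eqref{eq:WNE_1D}, and with $\tau:=k\varepsilon^{-\beta}$ the grids match, $s_n=nk \Leftrightarrow t_n=n\tau$. The crucial observation is that, since $\bar\zeta_l=\varepsilon^{-\beta}\zeta_l$ gives $k\bar\zeta_l=\tau\zeta_l$, the coefficients in \eqref{eq:barpqr} collapse to those in \eqref{eq:pql}:
\begin{equation*}
\bar p_l=\cos(\tau\zeta_l)=p_l,\qquad \bar q_l=\frac{\sin(\tau\zeta_l)}{\zeta_l}=q_l,\qquad \bar r_l=\frac{\varepsilon^2(\cos(\tau\zeta_l)-1)}{\zeta_l^2}=r_l,
\end{equation*}
where the factor $\varepsilon^{-\beta}$ arising from the rescaled initial velocity $\partial_s v(\cdot,0)=\varepsilon^{-\beta}\gamma$ is absorbed exactly into $\bar q_l$. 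Hence the two-step recursions \eqref{eq:EWI-FS_HOE} and \eqref{eq:EWI-FS} are identical and start from the same data $v^0_M=u^0_M=P_M\phi$, so a one-line induction gives $v^n_M(x)=u^n_M(x)$ for all $n$, and therefore $v(x,s_n)-v^n_M(x)=u(x,t_n)-u^n_M(x)$.

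The second step is to check that the hypotheses transfer. By the chain rule $\partial_t u=\varepsilon^{\beta}\partial_s v$ and $\partial_{tt}u=\varepsilon^{2\beta}\partial_{ss}v$, so the bounds $\|\partial_s v\|_{L^\infty([0,T_0];W^{1,4})}\lesssim\varepsilon^{-\beta}$ and $\|\partial_{ss}v\|_{L^\infty([0,T_0];H^1)}\lesssim\varepsilon^{-2\beta}$ in assumption (B) are exactly what is needed to produce the $O(1)$ bounds $\|\partial_t u\|_{W^{1,4}}\lesssim1$ and $\|\partial_{tt}u\|_{H^1}\lesssim1$ of assumption (A) on the stretched interval $[0,T_\varepsilon]$ with $T_\varepsilon=T_0/\varepsilon^{\beta}$; the $H^{m_0}$ bound is unchanged, and $\overline{M_1}=M_1$, $\overline{M_2}=M_2$. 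One then verifies that \eqref{eq:con_k} with $\tau=k\varepsilon^{-\beta}$ is precisely \eqref{eq:con_tau} (the CFL term matching term-by-term, and the factor $\tfrac14\varepsilon^\beta$ forcing $\tau\lesssim1$), and that \eqref{eq:stability_v} reduces to \eqref{eq:stability}.

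With the correspondence in place I would invoke Theorem~\ref{thm:EWI-FS} directly. It yields $\|u(x,t_n)-u^n_M\|_\lambda\lesssim h^{m_0-\lambda}+\varepsilon^{2-\beta}\tau^2$ for $0\le n\le T_0\varepsilon^{-\beta}/\tau$; substituting $\tau=k\varepsilon^{-\beta}$ converts the range to $0\le n\le T_0/k$ and the temporal term to $\varepsilon^{2-\beta}\tau^2=\varepsilon^{2-3\beta}k^2$, which is precisely the first estimate in \eqref{eq:error_HOE}.

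The only genuinely new point---and the step I expect to be the main obstacle---is recovering the uniform a priori bound $\|v^n_M\|_{L^\infty}\le1+\overline{M_1}$ with all constants independent of $\varepsilon$. In the closing of the mathematical induction of Theorem~\ref{thm:EWI-FS}, the $L^\infty$ bound is obtained from the $H^1$ error estimate through the (inverse/Sobolev) inequality, so it requires the error itself to be small. After rescaling the error constant is $\varepsilon^{2-3\beta}k^2$, and the prefactor $\varepsilon^{2-3\beta}$ \emph{blows up} once $\beta>2/3$. To keep $\varepsilon^{2-3\beta}k^2$ uniformly small one must therefore impose $k\lesssim\varepsilon^{3\beta/2-1}$ in that regime, while for $\beta\le2/3$ the prefactor is harmless and $k\lesssim1$ suffices; the two cases combine into exactly the stated restriction $k\le\varepsilon^{\alpha^{\ast}}k_0$ with $\alpha^{\ast}=\max\{0,3\beta/2-1\}$. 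Once this smallness is guaranteed the inverse-inequality argument closes the induction verbatim, completing the proof.
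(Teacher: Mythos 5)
Your proposal is correct and follows exactly the route the paper takes: the paper proves Theorem~\ref{thm:EWI-FS_HOE} by setting $\tau=k\varepsilon^{-\beta}$ and invoking Theorem~\ref{thm:EWI-FS} directly, which is precisely your rescaling argument, with the substitution $\varepsilon^{2-\beta}\tau^2=\varepsilon^{2-3\beta}k^2$ and the step range $T_0\varepsilon^{-\beta}/\tau=T_0/k$. You in fact supply details the paper leaves implicit --- the coefficient identities $\bar p_l=p_l$, $\bar q_l=q_l$, $\bar r_l=r_l$ (hence $v^n_M\equiv u^n_M$), the transfer of assumption (B) to (A) via the chain rule, and the origin of $\alpha^{\ast}=\max\{0,3\beta/2-1\}$ in keeping $\varepsilon^{2-3\beta}k^2$ uniformly small for the $L^\infty$ bound --- all of which are consistent with the paper's one-line justification.
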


Based on Theorem \ref{thm:EWI-FS_HOE}, for a given accuracy bound $\delta_0 > 0$, the $\varepsilon$-scalability of the EWI-FS/EWI-FP method for the oscillatory NKGE \eqref{eq:HOE_1D} is:
\begin{equation*}
h = O(1),\ k = O(\varepsilon^{\alpha^{\ast}}\sqrt{\delta_0}) = O(\varepsilon^{\alpha^{\ast}}),\ \alpha^{\ast} = \max\{0, 3\beta/2-1\}, \ 0\leq \beta \leq 2.
\end{equation*}
This indicates that, in order to obtain ``correct" numerical solution in the fixed time interval, one has to take the meshing strategy: $h=O(1)$ and $k=O(1)$, when $0\leq\beta\leq 2/3$; and resp., $h=O(1)$ and $k=O(\varepsilon^{3\beta/2-1})$, when $2/3<\beta\leq 2$. These results are useful for choosing mesh size and time step in practical computations such that the numerical results are trustable.

\subsection{Numerical results of the oscillatory NKGE in the whole space}
To avoid repetitions, we consider the following oscillatory NKGE in $d$-dimensional ($d = 1, 2, 3$) whole space
\begin{equation}
\left\{
\begin{aligned}
&\varepsilon^{2\beta }\partial_{ss} v({\bf{x}}, s) - \Delta v({\bf{x}}, s) + v({\bf{x}}, s) + \varepsilon^{2} v^3({\bf{x}}, s) = 0,\quad {\bf{x}} \in \mathbb{R}^d,\quad s > 0,\\
&v({\bf{x}}, 0) = \phi({\bf{x}}), \quad \partial_s v({\bf{x}}, 0) = \varepsilon^{-\beta}\gamma({\bf{x}}), \quad {\bf{x}} \in \mathbb{R}^d .
\end{aligned}\right.
\label{eq:HOE_whole}
\end{equation}

The solution of the oscillatory NKGE \eqref{eq:HOE_whole} propagates waves with wavelength at $O(1)$ in space and $O(\varepsilon^{\beta})$ in time, and wave speed in space at $O(\varepsilon^{-\beta})$. To illustrate the outgoing waves of the solution to the oscillatory NKGE \eqref{eq:HOE_whole} further, Figure \ref{fig:HOE_x} shows the solutions $v(x, 1)$ of the oscillatory NKGE \eqref{eq:HOE_whole} with $d=1$ and the initial data
\begin{equation}
\phi(x) = 1/(e^{x^2}+e^{-x^2}) \quad \mbox{and} \quad \gamma(x) = 2e^{-x^2},\qquad x \in \mathbb{R}.
\label{eq:initial_1}
\end{equation}
\begin{figure}[htb]
\begin{minipage}{\textwidth}
\centerline{\includegraphics[width = 14cm, height = 6cm]{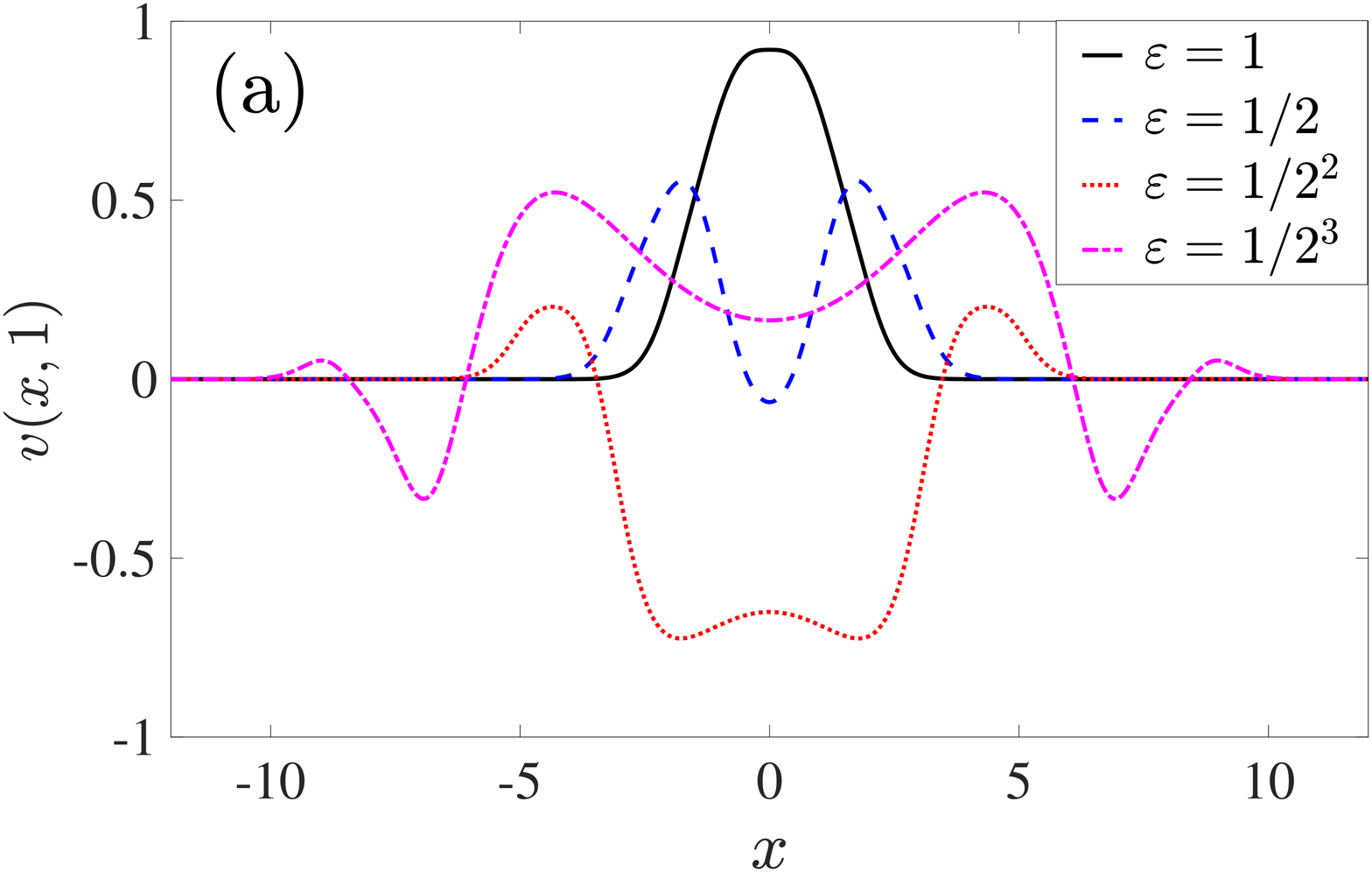}}
\end{minipage}
\hfill
\begin{minipage}{\textwidth}
\centerline{\includegraphics[width = 14cm, height = 6cm]{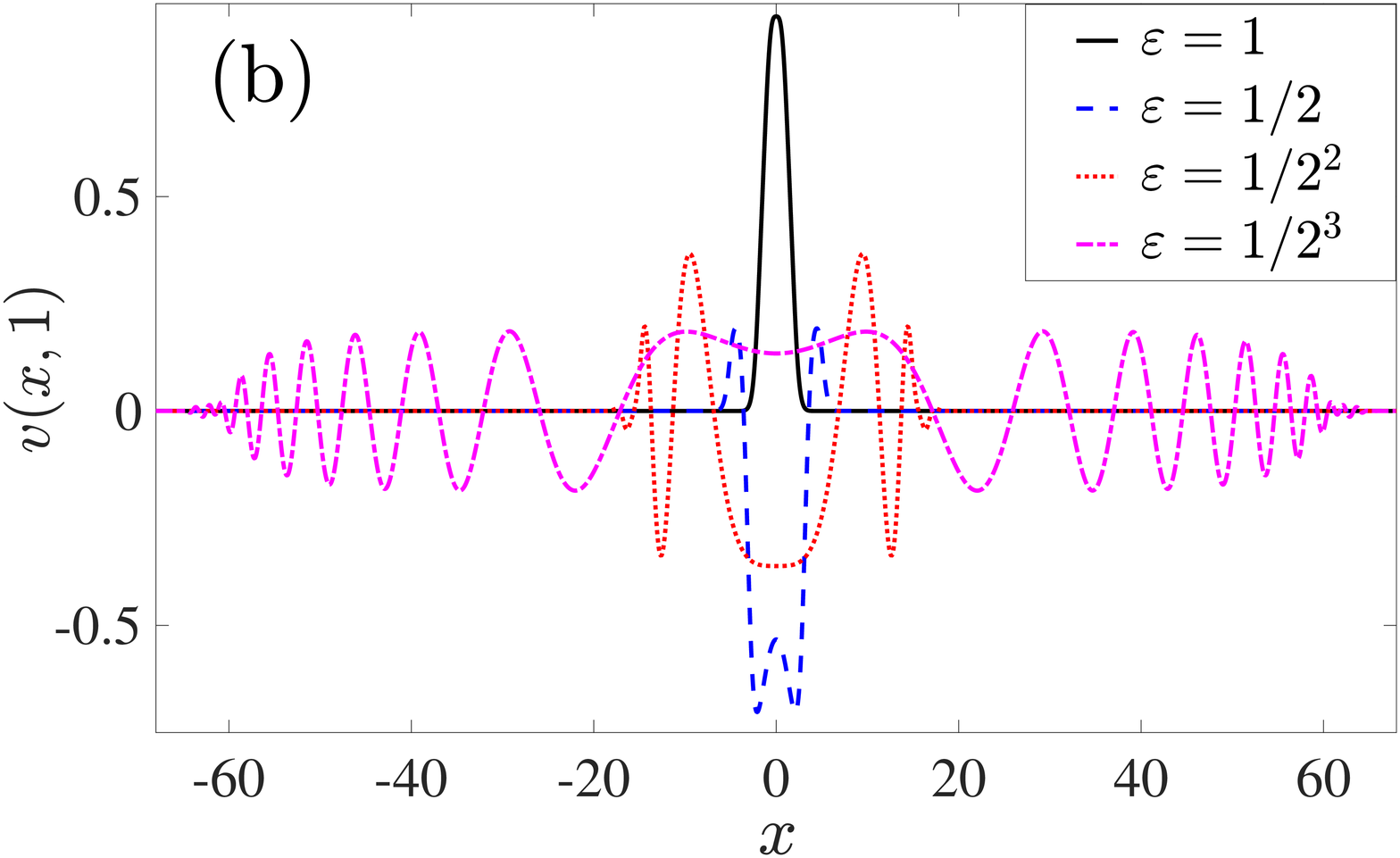}}
\end{minipage}
\caption{The solutions $v(x, 1)$ of the oscillatory NKGE \eqref{eq:HOE_whole} for different $\varepsilon$ and $\beta$: (a) $\beta = 1$, (b) $\beta = 2$.}
\label{fig:HOE_x}
\end{figure}

Similar to those in the literature, due to the fast decay of the solution of the oscillatory NKGE \eqref{eq:HOE_whole}  at the far field (see \cite{BD, DB, SV} and references therein), in practical computations, we usually truncate the original whole space problem onto a bounded domain with periodic boundary conditions, which is large enough such that the truncation error is negligible. Due to the rapid outgoing waves with wave speed $O(\varepsilon^{-\beta})$, the computational domain $\Omega_{\varepsilon}$ has to be chosen as $\varepsilon$-dependent.

In the following, we report numerical results of the oscillatory NKGE \eqref{eq:HOE_whole} with $d = 1$ and the initial data \eqref{eq:initial_1}. The problem is solved on a bounded domain $\Omega_{\varepsilon} = [-4 - \varepsilon^{-\beta}, 4 + \varepsilon^{-\beta}]$. The ``exact" solution is obtained numerically by the time-splitting Fourier pseudospectral method with a very fine mesh size and a very small time step, e.g. $h_e = 1/16$ and $k_e = 10^{-5}$. Denote $v^n_{h, k}$ as the numerical solution at $s = s_n$ by the EWI-FP \eqref{eq:v_n1}-\eqref{eq:EWI-FP_HOE} with mesh size $h$ and time step $k$. The errors are denoted as $\tilde{e}(x, s_n) \in X_M$ with $\tilde{e}(x, s_n) = v(x, s_n) - I_M(v^n_{h, k})(x)$. We also measure the $H^1$ norm of $\tilde{e}(x, s_n)$ and the errors are displayed at $s = 1$ with different $\varepsilon$ and $\beta$. For the oscillatory NKGE, we study the following three cases with respect to different $\beta$:

Case I. Classical case, i.e., $\beta = 0$;

Case II. Intermediately oscillatory case, i.e., $\beta = 1$;

Case III. Highly oscillatory case, i.e., $\beta = 2$.

For spatial error analysis, we fix the time step as $k = 10^{-5}$ such that the temporal error can be ignored, and solve the oscillatory NKGE \eqref{eq:HOE_whole} with different mesh size $h$. Table \ref{tab:HOE_h} depicts the spatial errors for $\beta = 0$, $\beta = 1$ and $\beta = 2$. For temporal error analysis, a very fine mesh size $h = 1/16$ is chosen such that the spatial errors can be neglected. Tables \ref{tab:HOE_beta0_t}-\ref{tab:HOE_beta2_t} show the temporal errors for $\beta = 0$, $\beta = 1$ and $\beta = 2$, respectively.

\begin{table}
\caption{Spatial errors of the EWI-FP \eqref{eq:v_n1}-\eqref{eq:EWI-FP_HOE} for the oscillatory NKGE \eqref{eq:HOE_whole} with \eqref{eq:initial_1} for different $\beta$ and $\varepsilon$.}
\centering
\begin{tabular}{c|ccccc}
\hline
&$\|\tilde{e}(\cdot, 1)\|_1$ & \;\;\; $h_0 = 1 $ \;\;\; & \;\;\; $h_0/2 $ \;\;\; & \;\;\; $h_0/2^2$ \;\;\; & \;\;\;  $h_0/2^3$ \;\;\; \\
\hline
\multirow{5}{*}{$\beta=0$}
& $\varepsilon_0 = 1$ & 3.66E-2 & 1.15E-3 & 7.13E-6 & 3.34E-7 \\
&$\varepsilon_0 / 2$ & 5.15E-2 & 5.43E-4 & 2.56E-6 & 3.26E-7 \\
&$\varepsilon_0 / 2^2$ & 5.61E-2 & 6.35E-4 & 1.64E-6 & 3.10E-7 \\
&$\varepsilon_0 / 2^3$ & 5.73E-2 & 6.89E-4 & 1.56E-6 & 2.96E-7\\
&$\varepsilon_0 / 2^4$ & 5.76E-2 & 7.04E-4 & 1.56E-6 & 3.06E-7 \\
\hline
\multirow{5}{*}{$\beta=1$}
& $\varepsilon_0 = 1$ & 3.66E-2 & 1.15E-3 & 7.13E-6 & 3.34E-7 \\
&$\varepsilon_0 / 2$  &1.08E-1 & 1.23E-3 & 7.88E-6 & 8.99E-7 \\
&$\varepsilon_0 / 2^2$ & 1.78E-1 & 4.00E-3 & 1.23E-5 & 6.68E-7 \\
&$\varepsilon_0 / 2^3$ & 2.26E-1 & 9.90E-3 & 2.72E-5 & 1.20E-6 \\
&$\varepsilon_0 / 2^4$ & 4.43E-2 & 1.81E-2 & 5.90E-5 & 3.71E-7 \\
\hline
\multirow{5}{*}{$\beta=2$}
& $\varepsilon_0 = 1$ & 3.66E-2 & 1.15E-3 & 7.13E-6 & 3.34E-7  \\
&$\varepsilon_0 / 2$ & 1.64E-1 & 3.43E-3 & 1.72E-5 & 5.70E-7 \\
&$\varepsilon_0 / 2^2$ & 4.94E-2 & 1.78E-2 & 6.16E-5 & 3.66E-7 \\
&$\varepsilon_0 / 2^3$ & 2.73E-1 & 1.83E-2 & 6.03E-5 & 9.60E-8 \\
&$\varepsilon_0 / 2^4$ & 1.60E-1 & 1.90E-2 &  8.86E-5 & 2.75E-7 \\
\hline
\end{tabular}
\label{tab:HOE_h}
\end{table}

\begin{table}
\caption{Temporal errors of the EWI-FP \eqref{eq:v_n1}-\eqref{eq:EWI-FP_HOE} for the oscillatory NKGE (\ref{eq:HOE_whole}) with \eqref{eq:initial_1} and $\beta=0$.}
\centering
\begin{tabular}{ccccccc}
\hline
$\|\tilde{e}(\cdot, 1)\|_1$ & $k_0 = 0.1 $ & $k_0/2 $ &$k_0/2^2 $  &  $k_0/2^3 $  &  $k_0/2^4$  &  $k_0/2^5$  \\
\hline
$\varepsilon_0 = 1$ & 1.08E-2 & 2.68E-3 & 6.70E-4 & 1.67E-4 & 4.18E-5 & 1.05E-5  \\
order & - & 2.01 & 2.00 & 2.00 & 2.00 & 1.99 \\
\hline
$\varepsilon_0/2$ & 3.99E-3 & 9.95E-4 & 2.48E-4 & 6.21E-5 & 1.55E-5 & 3.88E-6  \\
order & - & 2.00 & 2.00 & 2.00 & 2.00 & 2.00 \\
\hline
$\varepsilon_0/2^2$ & 1.15E-3 & 2.86E-4 & 7.15E-5 & 1.79E-5 & 4.47E-6 & 1.12E-6  \\
order & - & 2.01 & 2.00 & 2.00 & 2.00 & 2.00 \\
\hline
$\varepsilon_0/2^3$ & 2.98E-4 & 7.43E-5 & 1.86E-5 & 4.64E-6 & 1.16E-6 & 2.90E-7  \\
order & - & 2.00 & 2.00 & 2.00 & 2.00 & 2.00 \\
\hline
$\varepsilon_0/2^4$ & 7.52E-5 & 1.88E-5 & 4.69E-6 & 1.17E-6 & 2.93E-7 & 7.32E-8 \\
order & - & 2.00 & 2.00 & 2.00 & 2.00 & 2.00 \\
\hline
\end{tabular}
\label{tab:HOE_beta0_t}
\end{table}

\begin{table}
\caption{Temporal errors of the EWI-FP \eqref{eq:v_n1}-\eqref{eq:EWI-FP_HOE} for the oscillatory NKGE \eqref{eq:HOE_whole} with \eqref{eq:initial_1} and $\beta=1$.}
\centering
\begin{tabular}{ccccccc}
\hline
$\|\tilde{e}(\cdot, 1)\|_1$ &$k_0 = 0.1 $ & $k_0/2 $ &$k_0/2^2 $ & $k_0/2^3 $ & $k_0/2^4$  & $k_0/2^5$ \\
\hline
$\varepsilon_0 = 1$ & \bf{1.08E-2} & 2.68E-3 & 6.70E-4 & 1.67E-4 & 4.18E-5 & 1.05E-5 \\
order & \bf{-} & 2.01 & 2.00 & 2.00 & 2.00 & 1.99 \\
\hline
$\varepsilon_0 / 2 $  & 2.57E-2 & \bf{6.26E-3} & 1.55E-3 & 3.88E-4 & 9.70E-5 & 2.42E-5  \\
order & -  & \bf{2.04} & 2.01 & 2.00 & 2.00 & 2.00  \\
\hline
$\varepsilon_0 / 2^2 $ & 5.01E-2 & 1.15E-2 & \bf{2.81E-3} & 7.00E-4 & 1.75E-4 & 4.35E-5 \\
order & -  & 2.12 & \bf{2.03} & 2.01 & 2.00 & 2.00  \\
\hline
$\varepsilon_0 / 2^3 $ & 2.57E-1 & 2.12E-2 & 4.78E-3 &  \bf{1.17E-3} & 2.91E-4 & 7.28E-5  \\
order & -  & 3.60 & 2.15 & \bf{2.03} & 2.01 & 2.00 \\
\hline
$\varepsilon_0 / 2^4$ & 1.70E-1 & 1.09E-1 & 7.47E-3 & 1.70E-3 & \bf{4.18E-4} & 1.04E-4  \\
order & - & 0.64 & 3.87 & 2.14 & \bf{2.02} & 2.01  \\
\hline
\end{tabular}
\label{tab:HOE_beta1_t}
\end{table}

\begin{table}
\caption{Temporal errors of the EWI-FP \eqref{eq:v_n1}-\eqref{eq:EWI-FP_HOE} for the oscillatory NKGE \eqref{eq:HOE_whole} with \eqref{eq:initial_1} and $\beta=2$.}
\centering
\begin{tabular}{ccccccc}
\hline
$\|\tilde{e}(\cdot, 1)\|_1$ &$k_0 = 0.1 $ & $k_0/4 $ &$k_0/4^2 $ & $k_0/4^3 $ & $k_0/4^4$  & $k_0/4^5$ \\
\hline
$\varepsilon_0 = 1$ & \bf{1.08E-2} & 6.70E-4 & 4.18E-5 & 2.62E-6 & 1.64E-7 & $<$1E-7 \\
order & \bf{-} & 2.01 & 2.00  & 2.00 & 2.00 & - \\
\hline
$\varepsilon_0 / 2 $  & 1.98E-1 & \bf{1.10E-2} & 6.86E-4 & 4.28E-5 & 2.68E-6 & 1.64E-7  \\
order & -  & \bf{2.08} & 2.00 & 2.00 & 2.00 & 2.02  \\
\hline
$\varepsilon_0 / 2^2 $ & 3.25E+0 & 1.22E-1 & \bf{6.82E-3} & 4.24E-4 & 2.65E-5 & 1.65E-6 \\
order & -  & 2.37 & \bf{2.08} & 2.00 & 2.00 & 2.00  \\
\hline
$\varepsilon_0 / 2^3 $ & 1.33E+0 & 1.95E+1 & 4.71E-2 &  \bf{2.52E-3} & 1.57E-4 & 9.81E-6  \\
order & -  & -0.28 & 2.69 & \bf{2.11} & 2.00 & 2.00 \\
\hline
$\varepsilon_0 / 2^4$ & 4.81E-1 & 5.33E-1 & 9.88E-1 & 1.69E-2 & \bf{7.94E-4} & 4.95E-5  \\
order & - & -0.07 & -0.45 & 2.93 & \bf{2.21} & 2.00  \\
\hline
\end{tabular}
\label{tab:HOE_beta2_t}
\end{table}

From Tables \ref{tab:HOE_h}-\ref{tab:HOE_beta2_t} and additional numerical results not shown here for brevity, we can draw the following observations:

(i) In space, the EWI-FP \eqref{eq:v_n1}-\eqref{eq:EWI-FP_HOE} converges uniformly with exponential convergence rate for any fixed $0 < \varepsilon \leq 1$ and $0 \leq \beta \leq 2$ (cf. each row in Table \ref{tab:HOE_h}).

(ii) In time, for any fixed $\varepsilon = \varepsilon_0>0$, the EWI-FP \eqref{eq:v_n1}-\eqref{eq:EWI-FP_HOE} is uniformly second-order accurate (cf. the first rows in Tables \ref{tab:HOE_beta0_t}-\ref{tab:HOE_beta2_t}), which agree with the results in the literature. For the classical case, i.e., $\beta = 0$, Table \ref{tab:HOE_beta0_t}  indicates that the temporal error of the EWI-FP \eqref{eq:v_n1}-\eqref{eq:EWI-FP_HOE} behaves like $O(\varepsilon^2k^2)$ (cf. each row and column in Table \ref{tab:HOE_beta0_t}); When $\beta = 1$, the EWI-FP\eqref{eq:v_n1}-\eqref{eq:EWI-FP_HOE} converges quadratically in time when $k \lesssim \varepsilon$ (cf. each row in the upper triangle above the diagonal (corresponding to $k \sim \varepsilon$ and being labelled in bold letters) in Table \ref{tab:HOE_beta1_t}). When $\beta = 2$, the EWI-FP\eqref{eq:v_n1}-\eqref{eq:EWI-FP_HOE} converges quadratically in time when $k \lesssim \varepsilon^2$ (cf. each row in the upper triangle above the diagonal (corresponding to $k \sim \varepsilon^2$ and being labelled in bold letters) in Table \ref{tab:HOE_beta2_t}). In summary, our numerical results confirm our rigorous error estimates.

\section{Conclusion}
The Gautschi-type exponential wave integrator Fourier spectral/ pseudospectral (EWI-FS/EWI-FP) method was proposed and analyzed for the long-time dynamics of the nonlinear Klein-Gordon equation (NKGE) with a weak cubic nonlinearity. Uniform error bounds of the EWI-FS/EWI-FP discretization were rigorously established up to the time at $O(\varepsilon^{-\beta})$ with $ 0 \leq \beta \leq 2$ and $\varepsilon \in (0,1]$ a dimensionless parameter used to characterize the strength of the nonlinearity. In addition, the EWI-FS/EWI-FP method was also applied to solve an oscillatory NKGE with the error bounds obtained straightforwardly. Based on the error bounds, in order to obtain ``correct'' numerical approximations of the oscillatory NKGE, the $\varepsilon-$scalability (or meshing strategy requirement) of the EWI-FS/EWI-FP discretization has to be taken as: $h = O(1), k = O(\varepsilon^{\alpha^{\ast}})$ with $\alpha^{\ast} = \max\{0, 3\beta/2-1\}$. Finally, extensive numerical results were reported to confirm our error estimates and demonstrate that they are optimal and sharp.

\section*{Acknowledgements}
The authors would like to thank Professor Weizhu Bao for his valuable suggestions and comments. This work was partially supported by the Ministry of Education of Singapore grant R-146-000-290-114 (Y. Feng) and the Fundamental Research Funds for the Central Universities 531107051208 and the NSFC grant 11901185 (W. Yi). Part of the work was done when the authors were visiting the Department of Mathematics and the Institute for Mathematical Sciences at the National University of Singapore in 2018 and 2020.

\bibliographystyle{siamplain}

\begin{thebibliography}{10}
	
	
	\bibitem{BC}
	{\sc  W. Bao and Y. Cai},
	Optimal error estimates of finite difference methods for the Gross-Pitaevskii equation with angular momentum rotation,
	Math. Comp., 82 (2013), pp. 99-128.
	
	\bibitem{BC1}
	{\sc  W. Bao and Y. Cai},
	Uniform and optimal error estimates of an exponential wave integrator sine pseudospectral method for the nonlinear Sch{\"o}dinger equation with wave operator,
	SIAM J. Numer. Anal., 52 (2014), pp. 1103-1127.
	
	 \bibitem{BCJY}
    {\sc W. Bao, Y. Cai, X. Jia and J. Yin},
     Error estimates of numerical methods for the nonlinear Dirac equation in the nonrelativistic limit regime, Sci. China Math., 59 (2016), pp. 1461-1494.
	
	\bibitem{BCZ}
	{\sc W. Bao, Y. Cai and X. Zhao},
	A uniformly accurate multiscale time integrator pseudospectral method for the Klein-Gordon equation in the nonrelativistic limit regime,
	SIAM J. Numer. Anal., 52 (2014), pp. 2488-2511.
	
	\bibitem{BD}
	{\sc  W. Bao and X. Dong},
	Analysis and comparison of numerical methods for the Klein-Gordon equation in the nonrelativistic limit regime,
	Numer. Math., 120 (2012), pp. 189-229.
			
	\bibitem{BDX}
	{\sc  W. Bao, X. Dong and J. Xin},
	Comparisons between sine-Gordon equation and perturbed nonlinear Schr{\"o}dinger equations for modeling light bullets beyond critical collapse,
	Phys. D, 239 (2010), pp. 1120-1134.	

        \bibitem{BDZ}
       {\sc W. Bao, X. Dong and X. Zhao},
       An exponential wave integrator pseudospectral method for the Klein-Gordon-Zakharov system, SIAM J. Sci. Comput., 35 (2013), pp. A2903-A2927.

       \bibitem{BDZ0}
       {\sc W. Bao, X. Dong and X. Zhao},
       Uniformly accurate multiscale time integrators for highly oscillatory second order differential equations, J. Math. Study, 47 (2014), pp. 111-150.

         \bibitem{BFSU}
        {\sc W. Bao, Y. Feng and C. Su},
        Uniform error bounds of a time-splitting spectral method for the long-time dynamics of the nonlinear Klein-Gordon equation with weak nonlinearity, arXiv: 2001.10868.
        
        \bibitem{BFY}
        {\sc W. Bao, Y. Feng and W. Yi},
        Long time error analysis of finite difference time domain methods for the nonlinear Klein-Gordon equation with weak nonlinearity, Commun. Comput. Phys., 26 (2019), pp.  1307-1334.
        
        \bibitem{BY}
        {\sc W. Bao and L. Yang},
        Efficient and accurate numerical methods for the Klein-Gordon-Schr\"{o}dinger equations, J. Comput. Phys., 225 (2007), pp. 1863-1893.


        \bibitem{BFS}
        {\sc S. Baumstark, E. Faou and K. Schratz},
        Uniformly accurate exponential-type integrators for Klein-Gordon equations with asymptotic convergence to the classical NLS splitting, Math. Comp., 87 (2018), pp.   1227-1254.

        \bibitem{BJ}
	{\sc J. Bourgain},
	Construction of approximative and almost periodic solutions of perturbed linear Schr{\"o}dinger and wave equations,
	Geom. Funct. Anal., 6 (1996), pp. 201-230.
	
	 \bibitem{BV}
	{\sc  P. Brenner and W. von Wahl},
	Global classical solutions of nonlinear wave equations,
	Math. Z., 176 (1981), pp. 87-121.
	
	\bibitem{CCLM}
	{\sc P. Chartier, N. Crouseilles, M. Lemou and F. M\'ehats},
	Uniformly accurate numerical schemes for highly oscillatory Klein-Gordon and nonlinear Schr\"odinger equations,
	Numer. Math., 129 (2015), pp. 211-250.
	
	\bibitem{CCLMV}
	{\sc P. Chartier, N. Crouseilles, M. Lemou, F. M\'ehats and G. Vilmart}, A New Class of Uniformly Accurate Numerical Schemes for Highly Oscillatory Evolution Equations. Found. Comput. Math. 20(2020), pp. 1-33.
	
	\bibitem{CE}
	{\sc S. C. Chikwendu and C. V. Easwaran},
	Multiple-scale solution of initial-boundary value problems for weakly nonlinear Klein-Gordon equations on the semi-infinite line,
	SIAM J. Appl. Math., 52 (1992), pp. 946-958.
	
	\bibitem{CHL}
	{\sc D. Cohen, E. Hairer and Ch. Lubich},
	Conservation of energy, momentum and actions in numerical discretizations of non-linear wave equations, Numer. Math., 110 (2008), pp. 113-143.
	
	\bibitem{CHL1}
	{\sc D. Cohen, E. Hairer and Ch. Lubich},
	Long-time analysis of nonlinearly perturbed wave equations via modulated Fourier expansions,
	Arch. Ration. Mech. Anal., 187 (2008), pp. 341-368.
	
	
	\bibitem{D}
	{\sc J.-M. Delort},
	{Temps d'existence pour l'{\'e}quation de Klein-Gordon semi-lin{\'e}aire {\`a} donn{\'e}es petites p{\'e}riodiques},
	Amer. J. Math., 120 (1998), pp. 663-689.
	
	\bibitem{D2}
	{\sc J.-M. Delort},
	{On long time existence for small solutions of semi-linear Klein-Gordon equations on the torus},
	J. Anal. Math., 107 (2009), pp. 161-194.
	
	
	\bibitem{DEGM}
	{\sc R. K. Dodd, J. C. Eilbeck, J. D. Gibbon and H. C. Morris},
	Solitons and Nonlinear Wave Equations,
	Academic Press, New York, 1984.	
	
	\bibitem{DXZ}
	{\sc  X. Dong, Z. Xu and X. Zhao},
	On time-splitting pseudospectral discretization for nonlinear Klein-Gordon equation in nonrelativistic limit regime,
	Commun. Comput. Phys., 16 (2014), pp. 440-466.
	
	\bibitem{DB}
	{\sc D. B. Duncan},
	Sympletic finite difference approximations of the nonlinear Klein-Gordon equation,
	SIAM J. Numer. Anal., 34 (1997), pp. 1742-1760.
	
	
	\bibitem{FS}
	{\sc E. Faou and K. Schratz},
	Asymptotic preserving schemes for the Klein-Gordon equation in the nonrelativistic limit regime,
	Numer. Math., 126 (2014), pp. 441-469.
	
	\bibitem{FZ}
	{\sc D. Fang and Q. Zhang},
	Long-time existence for semi-linear Klein--Gordon equations on tori,
	J. Differential Equations, 249 (2010), pp. 151-179.
	
	\bibitem{Feng}
	{\sc Y. Feng},
	Long time error analysis of the fourth-order compact finite difference methods for the nonlinear Klein-Gordon equation with weak nonlinearity, arXiv:2003.03951.
	
	
	\bibitem{Gau}
	{\sc  W. Gautschi},
	Numerical integration of ordinary differential equations based on trigonometric polynomials,
	Numer. Math., 3 (1961), pp. 381-397.
	
	\bibitem{Gr1}
	{\sc  V. Grimm},
	A note on the Gautschi-type method for oscillatory second-order differential equations,
	Numer. Math., 102 (2005), pp. 61-66.
	
	\bibitem{Gr2}
	{\sc  V. Grimm},
	On error bounds for the Gautschi-type exponential integrator applied to oscillatory second-order differential equations,
	Numer. Math., 100 (2005), pp. 71-89.
	
	
	\bibitem{HLu}
	{\sc  E. Hairer and Ch. Lubich},
	Spectral semi-discretisations of weakly nonlinear wave equations over long times,
	Found. Comput. Math., 8 (2008), pp. 319-334.	
	
	
     \bibitem{HGG}
	{\sc J. S. Hesthaven, S. Gottlieb and D. Gottlieb},
	Spectral Methods for Time-Dependent Problems,
	Cambridge University Press, Cambridge, New York, 2007.
	
	\bibitem{HL}
	{\sc  M. Hochbruck and Ch. Lubich},
	A Gautschi-type method for oscillatory second-order differential equations,
	Numer. Math., 83 (1999), pp. 402-426.
	
	 \bibitem{J}
	{\sc S. Jin},
	Efficient asymptotic-preserving (AP) schemes for some multiscale kinetic equations,
	SIAM J. Sci. Comput., 21 (1999), pp. 441-454.	
	
	
     \bibitem{KT}
	{\sc M. Keel and T. Tao},
	Small data blow-up for semilinear Klein-Gordon equations,
	Amer. J. Math., 121 (1999), pp. 629-669.	
	
	 \bibitem{K}
	{\sc S. Klainerman},
	Global existence of small amplitude solutions to nonlinear Klein-Gordon equations in four space-time dimensions,
	Comm. Pure Appl. Math., 38 (1985), pp. 631-641.	
	
	
	 \bibitem{Landau}
	{\sc H. J. Landau},
	Necessary density conditions for sampling and interpolation of certain entire functions,
	Acta Math., 117 (1967), pp. 37-52.	
	
     \bibitem{LH}
	{\sc H. Lindblad},
	On the lifespan of solutions of nonlinear Klein-Gordon equations with small initial data,
	Comm. Pure Appl. Math., 43 (1990), pp. 445-472.	
	
	\bibitem{O}
	{\sc K. Ono},
	Global existence and asymptotic behavior of small solutions for semilinear dissipative Klein-Gordon equations,
	Discrete Contin. Dyn. Syst., 9 (2003), pp. 651-662.	
	
	
	\bibitem{SJJ}
	{\sc J. J. Sakurai},
	Advanced Quantum Mechanics,
	Addison-Wesley, New York, 1967.
	
	\bibitem{SB}
	{\sc  R. Sassaman and A. Biswas},
	Soliton perturbation theory for phi-four model and nonlinear Klein-Gordon equations,
	Commun. Nonlinear Sci. Numer. Simulat., 14 (2009), pp. 3239-3249.
	
	\bibitem{Shannon1}
	{\sc  C. E. Shannon},
	A mathematical theory of communication,
	Bell Syst. Tech. J., 27 (1948), pp. 379-423.
	
    \bibitem{Shannon2}
	{\sc  C. E. Shannon},
	A mathematical theory of communication,
	Bell Syst. Tech. J., 27 (1948), pp.623-666.
	
	
	\bibitem{ST}
	{\sc J. Shen and T. Tang},
	Spectral and High-Order Methods with Applications,
	Science Press, Beijing, 2006.
	
	\bibitem{SG}
	{\sc G. D. Smith},
	Numerical Solution of Partial Differential Equations: Finite Difference Methods,
	Oxford, UK, 1985.
	
	
	\bibitem{S}
	{\sc W. Strauss},
	Nonlinear wave equations,
	CBMS Reg. Conf. Ser. Math., 73, Amer. Math. Soc., Providence, RI, 1989.

	
	\bibitem{SV}
	{\sc  W. Strauss and L. V\'azquez},
	Numerical solution of a nonlinear Klein-Gordon equation,
	J. Comput. Phys., 28 (1978), pp. 271-278.
	
       	\bibitem{TY}
	{\sc G. Todorova and B. Yordanov},
	Critical exponent for a nonlinear Klein-Gordon equation with damping,
	J. Differential Equations, 174 (2001), pp. 464-489.	

	\bibitem{V}
	{\sc V. Thom{\'e}e},
	Galerkin Finite Element Methods for Parabolic Problems,
	Springer, Berlin, 1997.
	
	\bibitem{VH}
	{\sc W. T. Van Horssen},
	An asymptotic theory for a class of initial-boundary value problems for weakly nonlinear wave equations with an application to a model of the galloping oscillations of overhead transmission lines,
	SIAM J. Appl. Math., 48 (1988), pp. 1227-1243.	

	\bibitem{XG}
	{\sc  C. Xiong, M. R. Good, Y. Guo, X. Liu and K. Huang}
	Relativistic superfluidity and vorticity from the nonlinear Klein-Gordon equation,
	Phys. Rev. D, 90 (2014), pp. 125019.
	
	
         \bibitem{YRS}
	{\sc  W. Yi, X. Ruan and C. Su},
	Optimal resolution methods for the Klein-Gordon-Dirac system in the nonrelativistic limit regime,
	 J. Sci. Comput., 79 (2019), 1907-1935.

\end{thebibliography}

\end{document}